\newcommand{\R}{{\mathbb R}}
\newcommand{\N}{{\mathbb N}}
\newcommand{\de}{\partial}
\newcommand{\al}{\alpha}
\newcommand{\be}{\beta}
\newcommand{\pa}{\partial}
\newcommand{\erm}{{\mathrm{e}}}
\renewcommand{\rho}{\varrho}
\renewcommand{\epsilon}{\varepsilon}
\renewcommand{\theta}{\vartheta}
\newcommand{\la}{\lambda}
\renewcommand{\d}{\delta}
\renewcommand{\a}{\alpha}
\renewcommand{\b}{\beta}
\newcommand{\ga}{\gamma}
\newcommand{\Rn}{{\mathbb{R}}^{n}}
\newcommand{\Rm}{{\mathbb{R}}^{m}}
\newcommand{\I}{{\mathcal{I}}}
\newcommand{\A}{\mathcal{A}}
\newcommand{\D}{{\mathcal{D}}}
\newcommand{\ex}{{\mathrm{e}}}
\newcommand{\barint}
{\rule[.036in]{.12in}{.009in}\kern-.16in \displaystyle\int}
\theoremstyle{plain}
\newtheorem{teo}{Theorem}[section]
\newtheorem{prop}[teo]{Proposition}
\newtheorem{defi}[teo]{Definition}
\newtheorem{lemma}[teo]{Lemma}
\newtheorem{corol}[teo]{Corollary}
\theoremstyle{remark}
\newtheorem{remark}[teo]{Remark}
\renewcommand{\prec}{\preceq}
\begin{document}

\title[Extremal curves in nilpotent Lie groups]{Extremal curves in nilpotent
Lie groups}

\author[Le Donne]{Enrico Le Donne}

\address[Le Donne]{Department of Mathematics and Statistics, P.O. Box 35,
FIN-40014,
University of Jyv\"askyl\"a, Finland}%
\email{ledonne@msri.org}

\author[Leonardi]{Gian Paolo
Leonardi}
\address[Leonardi]{Universit\`a di Modena e Reggio Emilia,
  Dipartimento di Scienze
Fisiche, Informatiche e Matematiche, via Campi 213/b, 41100
  Modena, Italy}

\email{gianpaolo.leonardi@unimore.it}

\author[Monti]{Roberto Monti}
\address[Monti and Vittone]{Universit\`a di Padova, Dipartimento di Matematica,
via Trieste 63, 35121 Padova, Italy}
\email{monti@math.unipd.it}

\author[Vittone]{Davide Vittone}
\email{vittone@math.unipd.it}

\date{July 17, 2012}

\keywords{Regularity of geodesics, Abnormal curves, Extremal curves, Free
nilpotent groups, Carnot groups,  SubRiemannian geometry, Algebraic variety}%

\renewcommand{\subjclassname}{%
 \textup{2010} Mathematics Subject Classification}
\subjclass[]{53C17, 49K30.}

\thanks{This work was partially supported by the Fondazione CaRiPaRo Project
``Nonlinear Partial Differential Equations: models, analysis, and
control-theoretic problems'', Padova.}

\begin{abstract}
We classify extremal curves in free nilpotent Lie groups.
The classification is obtained via an explicit integration of the adjoint
equation in Pontryagin Maximum Principle. It turns out that abnormal extremals
are precisely the horizontal curves contained in algebraic
varieties  of a specific type. We also extend the results to the nonfree case.

\end{abstract}

\maketitle
\setcounter{tocdepth}{2}

\section{Introduction}

Let $M$ be a differentiable manifold and $\mathcal D\subset TM$ a
bracket generating distribution. A Lipschitz curve $\gamma:[0,1]\to M$ is
{\em horizontal} if $\dot\gamma(t) \in \mathcal D(\gamma(t))$ for
a.e.~$t\in[0,1]$.
Fixing a quadratic form on
$\mathcal D$, one can define the length of horizontal curves.
A distance  on $M$ can be defined by minimizing the length
of horizontal curves connecting pair of points.  This is known as sub-Riemannian
or
Carnot-Carath\'eodory distance.
If the resulting metric space is proper,
length minimizing curves between any given pair of points do exist.

The main open problem in the field is the regularity of 
length minimizing curves, see \cite[Problem 10.1]{montgomery}. Length minimizing
curves may in fact be {\em abnormal
extremals} in the sense of Geometric Control Theory: while normal extremals
are always smooth, abnormal ones are apriori
only Lipschitz continuous. We refer the reader to Section
\ref{TWO-TWO} for precise definitions. Let us only recall here that a
horizontal curve is an abnormal extremal if the
end-point mapping is singular at this curve, i.e., its differential   is not
surjective. The notion of abnormal extremal is also related to that of rigid
curve, see \cite{Bryant-Hsu,ZZ}.  
Abnormal extremals depend only on the structure $(M,\mathcal D)$
and not on the fixed quadratic form on $\mathcal D$;
it is well known that they can be length minimizing, see \cite{montg1,LS}.
On the other hand, in some structures the presence of singularities prevent  an
abnormal
extremal to be  length minimizing
\cite{LM,monti}. To our best knowledge, no example of minimizing nonsmooth
horizontal curve
is presently known.

The set of   abnormal
extremals of a structure $(M,\mathcal D)$ is an  interesting object that is
not yet well-understood. See, however, the deep second order analysis
\cite{AS2}.
In this paper, we describe this set when $M$ is a (connected, simply
connected) free nilpotent Lie group and $\mathcal D$ is the left-invariant
subbundle spanned by a system of generators of its Lie
algebra. We obtain a description that is constructive and of a
purely algebraic type. It is constructive in the sense that it permits to
construct  
abnormal extremals of any (feasible) desired type: Goh extremals, extremals with
given
corank, extremals of minimal order in the sense of \cite{CJT}, etc. The
classification is purely algebraic in the sense that it only depends
on the structure constants of the algebra of the group. 
These results also extend to other nilpotent groups. In particular, we consider
connected, simply connected, nilpotent and stratified Lie groups (Carnot
groups): this is of special interest because, by  Mitchell's
theorem, 
Carnot groups are the infinitesimal models
of equiregular sub-Riemannian structures.

\medskip

Let us give some flavour of the results contained in the paper. We start by
fixing a basis $X_1,\ldots,X_n$ of an $n$-dimensional free nilpotent  Lie
algebra $\mathfrak g$ generated by $r$ elements $X_1,\dots, X_r$; the choice of
this basis has to be done according to a very precise algorithm due to M. Hall
Jr., see \cite{Hall}. A  detailed description of this algorithm is contained in
Section \ref{HALLE}. 
This basis determines a collection of \emph{generalized structure constants},
see \eqref{GSC}. Using these constants, 
for any
$i=1,\ldots,n$ and for any 
multi-index $\alpha\in \I := \N^n=\{0,1,2,\dots\}^n$, 
we define certain linear mappings 
$\phi_{i\al}:\Rn\to\R$,  see \eqref{LinM}. When $|\a|:=\alpha_1+\dots+\alpha_n$
is large we have
$\phi_{i\al}=0$ because of the nilpotency.
For each $i=1,\dots,n $ and $v\in\R^n$, we introduce the 
polynomials
$P_i^v:\Rn\to\R$
\begin{equation}\label{gestalt-intro}
P_i^v(x)= \sum_{\a\in \I} \phi_{i\alpha}(v) 
 x^\a,\quad x\in\Rn,
\end{equation}
where we let $x^\alpha=x_1^{\alpha_1}\cdots x_n^{\alpha_n}$. 
These polynomials  enjoy some remarkable properties that are
discussed in Proposition \ref{propderivpolinomi}.

A result due to M.~Grayson and R.~Grossman \cite{GrGr} ensures that $\mathfrak
g$ is isomorphic to a certain algebra of vector fields in $\R^n$; it turns out
that $G$ can be identified with
$\Rn$ via
exponential coordinates of the second type associated with  the vector fields
corresponding to  $X_1,\ldots,X_n$, see Proposition \ref{EXPO}. For any
$v\in\Rn$, we call the set   
\[
  Z_v = \big\{x\in  \Rn: P_1^v(x) =\ldots=P_r^v(x) = 0\big\}
\]
an \emph{abnormal  variety} of  $G$  of {\em corank} $1$; when $v\neq 0$ we
have
$Z_v\neq\Rn$, see Proposition \ref{lemma-0}.

One of the   results proved in the paper is the following theorem; see
Definition \ref{CORANK} for
the notion of corank.

\begin{teo}\label{cor:polin:33} 
Let $G=\Rn$ be a free nilpotent Lie group and let $\ga:[0,1]\to G$
be a  horizontal curve with
$\ga(0)=0$. The following statements are
equivalent:

\begin{itemize}
 \item[A)] The curve $\ga$ is an abnormal extremal of corank $m\geq 1$.

 \item[B)] There exist $m$ linearly independent vectors
$v_1,\ldots,v_m\in\Rn$ such that $\gamma(t)\in  Z_{v_1}\cap\ldots\cap
Z_{v_m}$ for all $t\in[0,1]$.
\end{itemize}
\end{teo}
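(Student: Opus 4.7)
The plan is to use the Pontryagin Maximum Principle (PMP) to rephrase the abnormal-extremal condition as an ODE in the cotangent bundle, then to integrate that ODE explicitly in the free nilpotent setting. Since $G$ is free nilpotent and we work in exponential coordinates of the second type associated with the Hall basis $X_1,\ldots,X_n$, the left-invariant vector fields $X_i$ become polynomial in $x$, and the adjoint equation
\[
\dot\xi(t) = -\xi(t)\,D_x\!\Big(\sum_{i=1}^r u_i(t) X_i(x)\Big)\Big|_{x=\gamma(t)}
\]
is a linear ODE whose coefficients are polynomial in $\gamma$ and linear in the controls. By PMP, $\gamma$ is an abnormal extremal of corank $m$ if and only if there exist $m$ linearly independent absolutely continuous covectors $\xi_1,\ldots,\xi_m:[0,1]\to(\R^n)^*$ solving this adjoint equation and satisfying the horizontal annihilation condition $\langle \xi_j(t),X_i(\gamma(t))\rangle=0$ for $i=1,\ldots,r$ and a.e.\ $t$.

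The heart of the argument is to integrate the adjoint equation in closed form. I would show that, for any initial covector $v=\xi(0)\in\R^n$, the unique solution $\xi_v(t)$ of the adjoint equation satisfies the identification
\[
\langle \xi_v(t), X_i(\gamma(t))\rangle \;=\; P_i^v(\gamma(t)),
\qquad i=1,\ldots,r,
\]
with $P_i^v$ the polynomial defined in \eqref{gestalt-intro}. The natural way to produce this identity is to differentiate the right-hand side along $\gamma$, use the Hall-basis recursion encoded in the generalized structure constants, and invoke the properties of $\phi_{i\alpha}$ collected in Proposition \ref{propderivpolinomi}; up to the usual adjoint transport one recognises the coefficients of $P_i^v$ as precisely those of $\xi_v(t)\cdot X_i(\gamma(t))$ as a function of the monomials $\gamma(t)^\alpha$. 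The nilpotency of $\g$ guarantees that the series truncates, making the claim an algebraic identity rather than an analytic one.

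Granted the identification, the equivalence follows in short order. For A $\Rightarrow$ B, let $\xi_1,\ldots,\xi_m$ be the covectors supplied by PMP and set $v_j:=\xi_j(0)$; since the adjoint flow is a linear isomorphism of covectors, the $v_j$ are linearly independent, and the horizontal annihilation condition becomes $P_i^{v_j}(\gamma(t))=0$ for every $i\in\{1,\ldots,r\}$ and $t\in[0,1]$, i.e.\ $\gamma(t)\in Z_{v_1}\cap\cdots\cap Z_{v_m}$. For B $\Rightarrow$ A, given linearly independent $v_1,\ldots,v_m$ with $\gamma(t)\in\bigcap_j Z_{v_j}$, define $\xi_j$ as the solution of the adjoint equation with $\xi_j(0)=v_j$; linear independence of the $v_j$ propagates to $\xi_j(t)$ for each $t$, and the identification yields $\langle\xi_j(t),X_i(\gamma(t))\rangle=P_i^{v_j}(\gamma(t))=0$, so the PMP condition of corank $\geq m$ is satisfied. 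Sharpness of the corank follows by reversing this argument: a corank strictly larger than $m$ would force a further independent $v$, contradicting maximality of the family of $v_j$'s built from a basis of admissible initial covectors.

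The main obstacle is the closed-form integration step, i.e.\ verifying the identity $\langle\xi_v(t),X_i(\gamma(t))\rangle=P_i^v(\gamma(t))$. This is not a routine ODE computation: it must use both the specific combinatorics of the Hall basis (to make the polynomials $\phi_{i\alpha}$ meaningful) and the exponential coordinates of the second type from Proposition \ref{EXPO} (to make the $X_i$ explicit as polynomial vector fields). Once this algebraic miracle is in hand, the rest of the proof is essentially formal.
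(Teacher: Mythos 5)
Your proposal follows essentially the same route as the paper: everything is reduced to the closed-form integration of the adjoint equation, $\langle\xi_v(t),X_i(\gamma(t))\rangle=P_i^v(\gamma(t))$, which is precisely Theorem \ref{teointduale} (resting on the combinatorial Lemma \ref{Roberto}), after which A)$\Leftrightarrow$B) is the same formal bookkeeping of initial covectors, the annihilation condition $\lambda_1=\dots=\lambda_r=0$, and linear independence propagated by the linear adjoint flow. Your sketched derivation of the key identity --- differentiate $P_i^v\circ\gamma$ along the horizontal curve, apply Proposition \ref{propderivpolinomi}, and conclude by uniqueness for the linear ODE \eqref{8} --- is a correct way to obtain it, and it encodes the same Hall-basis/Jacobi-identity combinatorics that the paper isolates in Lemma \ref{Roberto}.
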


A stronger version of Theorem \ref{cor:polin:33} holds when $\gamma$ is a
{\em strictly abnormal} minimizer. In this case,  the Goh
condition   (see
Theorem \ref{teo:Goh}) implies that for all $t\in[0,1]$ we have 
\begin{equation}\label{gohvar-intro}
\gamma(t)\in \big\{x\in  \Rn: P_1^v(x) =\ldots=P_{s}^v(x) =
0\big\},
\end{equation}
where $s=\mathrm{dim}(\mathfrak g_1\oplus [\mathfrak g_1,\mathfrak g_1])$ and
$\mathfrak g_1$ is the first layer of $\mathfrak g$, i.e., the subspace spanned
by the
generators of $\mathfrak g$. We call the algebraic sets as in
\eqref{gohvar-intro} {\em Goh varieties} of $G$.

The  proof of Theorem \ref{cor:polin:33} relies upon Theorem \ref{teointduale},
where, for any extremal (normal or abnormal), we 
explicitly compute the dual curve  provided by Pontryagin Maximum
Principle. We started from the usual version of the differential equation for
the dual curve  in Optimal Control Theory,
see Theorem \ref{teo:eqdualvar1}, formula \eqref{ADJ}. The coordinates
$\la=(\la_1,\ldots,\la_n)$ of the dual curve in the basis of $1$-forms dual to
the fixed frame of vector fields, see \eqref{omegalambda}, satisfy the
system of equations
\eqref{8}. 
After studying some Lie groups of step 4, 5 and 6, we could  guess the
general formula for  $\lambda$.  This led us to the
polynomials in \eqref{gestalt-intro}: indeed, in Theorem \ref{teointduale}, we
prove that
\begin{equation}\label{eqlambda-intro}
\la_i(t) = P_i^v(\gamma(t)), \quad \text{for all $t\in[0,1]$ and $i=1,\dots,n$},
\end{equation}
where  $v:=\lambda(0)\neq 0$ is the initial condition.
To check formulae 
\eqref{eqlambda-intro},  we had to
understand how the
Jacobi identity enters the process of integration of the adjoint equation. The
key technical tool is a
combinatorial identity for iterated commutators in 
free nilpotent Lie algebras that  is proved by induction in Lemma
\ref{Roberto}. We were not able to find it in the Lie algebra
literature.

Formulae  \eqref{eqlambda-intro}
provide an explicit integration of the second equation in the
Hamiltonian system \eqref{HJ}, see Remark \ref{mamax}. These
formulae hold for both normal and abnormal extremals and, from the technical
viewpoint, are the main result of the paper.

In Section \ref{sec:notfree}, we extend our results from the free case to the
case of nonfree Carnot groups. The results here are less explicit
because they involve a ``lifting'' procedure from a nonfree group to a free one,
see Theorem \ref{PIV}.
However, the  results are precise enough to deduce in a purely algebraic way
some interesting facts on Carnot-Carath\'eodory geodesics, such as the
$C^\infty$
smoothness of length minimizing curves in Carnot groups of step 3 (see
\cite{cinesi}), and
Gol\`e-Karidi's example \cite{GK} of a strictly abnormal extremal in a Carnot
group. These and other examples are
briefly discussed in  Section \ref{EXA}.
\medskip

The results of this paper raise several   questions.
As noticed, abnormal extremals are precisely horizontal curves lying inside
abnormal
varieties: consequently, the interplay between the horizontal distribution
and the tangent space to the
variety determines the structure of all possible singularities of abnormal
extremals. The precise knowledge of these singularities could permit a fine
analysis of the length minimality properties of abnormal extremals.
It would be interesting to use the
techniques developed in \cite{LM} and \cite{monti} to exclude corners and other
kinds of singularities for length minimizers. See, however, the nonsmooth
extremal of Section \ref{Example2}.

Also, it would be   interesting to understand whether our results can be
extended to wider classes of Lie groups  or to other families of sub-Riemannian
structures. For instance, one could wonder whether abnormal extremals of
analytic sub-Riemannian manifolds (namely, analytic manifolds with analytic
horizontal distribution) are contained in an analytic variety and whether such 
varieties  can be characterized in some way.

As noted above, the family of abnormal extremals is a geometric object
associated with a manifold $M$ along with a bracket generating distribution
$\mathcal D \subset TM$. In the same spirit, an abnormal variety  is an
intrinsic algebraic subset of a (free) Lie group. 
The study of abnormal varieties could be  of  interest in real
algebraic
geometry. Na\-tu\-ral questions concern generic dimension, smoothness,
structure of singularities and  topology of abnormal varieties.
In Section  \ref{Example2}, we list the quadrics defining
Goh extremals
in
the free nilpotent Lie group   of rank $3$ and
step $4$.


\section{Extremal curves in sub-Riemannian geometry}

\label{TWO-TWO}
In this section, we recall some basic facts concerning extremal curves in
sub-Riemannian geometry. Let $X_1,\ldots,X_r$, $r\geq 2$, be linearly
independent smooth vector fields in
$\Rn$, $n\geq3$, and let $\mathcal D$ denote the distribution of $r$-planes 
$\mathcal D(x)
= \mathrm{span}\{X_1(x) , \ldots,X_r(x) \}$ with $x\in \Rn$.
$\mathcal D$ is called \emph{horizontal distribution} and its sections are
called horizontal vector fields. 
With respect to the standard basis $\tfrac{\partial}{\partial
x_1},\ldots,\tfrac{\partial}{\partial x_n}$ of $\Rn$ we
have for any $j=1,\dots,r$
\begin{equation}\label{COFFI}
X_j =\sum_{i=1}^n X_{ji}  \frac{\partial}{\partial x_i},
\end{equation}
where $X_{ji}:\Rn\to\R$ are  smooth   functions.

A Lipschitz curve
$\ga:[0,1]\to\Rn$ is $\mathcal D$-\emph{horizontal}, or simply
\emph{horizontal}, if there exists
a vector of functions $h=
(h_1,\ldots,h_r)\in  
L^\infty([0,1];\R^r)$ such that
\[
\dot\ga = 
 \sum_{j=1}^r h_j X_j(\ga),\quad \text{a.e. on }[0,1].
\]
The functions $h$ are called controls of $\ga$.
Let $g_x$ be the quadratic form on $\D(x)$ 
making $X_1,\ldots,X_r$ orthonormal.
The
horizontal length of a horizontal curve $\ga$ is then 
\[
 L(\gamma) = \Big( \int_0^1 g_{\ga(t)}(\dot\gamma(t) )dt\Big)^{1/2} 
=\Big( \int_0^1|h(t)|^2 dt\Big)^{1/2} .
\]
Here, we are adopting the $L^2$ definition for the length.
For any couple of points $x,y\in\Rn$, we can define the distance 
\begin{equation}\label{dist}
 d(x,y) = \inf\Big\{ L(\ga): \text{$\ga$ is  horizontal, $\ga(0)=x$
and $\ga(1)=y$} \Big\}.
\end{equation}
If the above set is nonempty for any $x,y\in\Rn$, then $d$ is a distance on
$\Rn$, usually called Carnot-Carath\'eodory distance.
This holds when the vector fields $X_1,\ldots,X_r$ satisfy the H\"ormander
bracket-generating condition: at any point of $\Rn$,
 $X_1,\ldots,X_r$ along with their commutators of sufficiently
large length span  a vector space of full dimension $n$.

If the resulting metric space $(\Rn,d)$ is  complete, the
infimum in \eqref{dist} is attained. We call a curve $\ga$ providing the minimum
a \emph{(length) minimizer}. If $h$ is the vector of the controls of a
minimizer $\ga$, we
call the pair $(\ga,h)$ an \emph{optimal pair}. The minimizer, which in general
is
not
unique,
 is found within the class of Lipschitz curves, which are differentiable almost
everywhere.  
  Pontryagin Maximum Principle
provides necessary conditions for a horizontal curve to be a minimizer.

\begin{teo}\label{teo:eqdualvar1} Let $(\ga,h)$ be an optimal pair. 
Then there exist $\xi_0\in\{0,1\}$ and a Lipschitz curve
$\xi:[0,1]\to\Rn$
such that:
\begin{itemize}
 \item[i)] $\xi_0+|\xi|\neq 0$ on $[0,1]$;

 \item[ii)] $\xi_0 h_j +\langle \xi, X_j(\ga)\rangle =0$ on
$[0,1]$ for all $j=1,\ldots, r$; 

 \item[iii)]  the coordinates $\xi_k$, $k=1,\ldots,n$, of the curve  
$\xi$ solve  the system of differential equations
\begin{equation}\label{ADJ}
    \dot\xi_k = -   \sum_{j=1}^ r \sum_{i=1}^n \frac{\de{X_{ji}}}{\de
x_k}(\gamma)
h_j \xi_i,  \quad\text{a.e.~on $[0,1]$.}
\end{equation}
\end{itemize}
\end{teo}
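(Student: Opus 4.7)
The plan is to recognize Theorem \ref{teo:eqdualvar1} as the direct specialization of Pontryagin's Maximum Principle (PMP) to the optimal control problem underlying the sub-Riemannian length minimization in \eqref{dist}. First I would reformulate on the fixed parameter interval $[0,1]$: by Cauchy--Schwarz, minimizing $L(\ga)$ among horizontal curves joining two given points is equivalent, up to constant-speed reparametrization, to minimizing the energy $E(h)=\tfrac12\int_0^1|h|^2\,dt$ subject to $\dot\ga=\sum_j h_j X_j(\ga)$ with fixed endpoints and controls $h\in L^\infty([0,1];\R^r)$. Thus every length-minimizing pair $(\ga,h)$ is (after constant-speed reparametrization) an energy minimizer, to which PMP applies directly.

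Next I would form the Pontryagin function, with multipliers $\xi_0\in\R$ and $\xi\in\Rn$,
\[
 \mathcal{H}(\ga,\xi,h,\xi_0) \;=\; \sum_{j=1}^r h_j \langle \xi, X_j(\ga)\rangle + \tfrac{\xi_0}{2}|h|^2,
\]
which via \eqref{COFFI} expands as $\mathcal{H}=\sum_{j,i} h_j X_{ji}(\ga)\xi_i+\tfrac{\xi_0}{2}|h|^2$. PMP supplies a Lipschitz covector $\xi$ and a scalar $\xi_0$, not simultaneously zero, for which the stationarity $\de\mathcal{H}/\de h_j=0$ and the adjoint equation $\dot\xi_k=-\de\mathcal{H}/\de x_k$ hold a.e.\ on $[0,1]$; a positive rescaling normalizes $\xi_0\in\{0,1\}$, with the sign absorbed into $\xi$. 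Differentiating $\mathcal{H}$ in $h_j$ yields exactly $\xi_0 h_j+\langle\xi,X_j(\ga)\rangle=0$, i.e.\ (ii); differentiating in $x_k$ reproduces \eqref{ADJ}, i.e.\ (iii). Lipschitz regularity of $\xi$ comes from the boundedness of $h$ and the smoothness of the $X_{ji}$.

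Finally, I would upgrade the PMP non-triviality $(\xi_0,\xi)\not\equiv 0$ to the pointwise condition (i). Since \eqref{ADJ} is a linear ODE in $\xi$ with coefficients bounded on the compact trajectory $\ga([0,1])$ (as $h\in L^\infty$ and the $X_{ji}$ are smooth), Gronwall forbids $\xi$ from vanishing at a single instant without being identically zero; thus in the abnormal case $\xi_0=0$, any zero of $\xi$ would force $\xi\equiv 0$, contradicting PMP. Hence $\xi_0+|\xi(t)|>0$ uniformly on $[0,1]$. The only obstacle is really a bookkeeping one: aligning PMP's various sign and normalization conventions so that stationarity produces exactly the sign pattern displayed in (ii), and verifying that the unbounded control set $\R^r$ causes no trouble because in this smooth, strictly convex-in-$h$ setting the maximum condition reduces to stationarity.
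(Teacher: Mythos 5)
The paper does not actually prove this theorem: it is quoted as a known result, with the proof deferred to Agrachev--Sachkov \cite[Chapter 12]{AS}. Your proposal is the standard specialization of the Pontryagin Maximum Principle to this sub-Riemannian control problem, i.e.\ essentially the argument the paper is invoking by citation, and it is correct in substance. A few points of care, which you partly flag yourself: since the paper's ``length'' is already the $L^2$ norm of the controls, its minimizers are exactly energy minimizers, so the constant-speed reparametrization step is not even needed; for the maximum condition over the unbounded control set $\R^r$ to be meaningful, the multiplier of $\tfrac12|h|^2$ must be nonpositive, so PMP gives $\xi_0\in\{0,-1\}$, and the paper's normalization $\xi_0\in\{0,1\}$ together with the sign pattern in (ii) is recovered by replacing $\xi$ with $-\xi$, which is harmless precisely because the adjoint system \eqref{ADJ} is linear in $\xi$; and in the abnormal case $\xi_0=0$ the Pontryagin function is linear (not strictly convex) in $h$, so the maximization over $\R^r$ forces the coefficients $\langle\xi,X_j(\ga)\rangle$ to vanish, which is (ii) there. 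Your upgrade of the nontriviality of $(\xi_0,\xi)$ to the pointwise condition (i) via uniqueness for the linear ODE \eqref{ADJ} is the standard and correct argument. So the proposal fills in, correctly, the textbook proof that the paper chose to cite rather than reproduce.
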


\noindent 
We refer to \cite[Chapter 12]{AS} for a proof of Theorem
\ref{teo:eqdualvar1} in a more general framework.
Equations \eqref{ADJ} are called  \emph{adjoint equations}.

\begin{defi}   
We say that a   horizontal curve
$\ga:[0,1]\to\Rn$  is an \emph{extremal} if there exist
$\xi_0\in\{0,1\}$   
and 
$\xi\in\mathrm{Lip}([0,1];\Rn)$
such that i), ii), and iii) in Theorem \ref{teo:eqdualvar1} hold.

We say that $\ga$ is a \emph{normal extremal} if there exists such a pair
$(\xi_0,\xi)$  with $\xi_0\neq 0$.  

We say that $\ga$ is an \emph{abnormal extremal}
if there exists such a pair with $\xi_0=0$. 

We say that $\ga$ is a
\emph{strictly abnormal extremal} if $\ga$ is an abnormal extremal but not a
normal one.
 \end{defi}

\noindent
We call  a curve $\xi$ satisfying i), ii), and iii) in Theorem
\ref{teo:eqdualvar1} a \emph{dual curve}
of $\gamma$.  We identify the curve $\xi$ with the curve of $1$-forms in $\Rn$
\[
 \xi = \xi_1 dx_1+\ldots+ \xi_n dx_n.
\]
The dual curve $\xi$ is constructed in the following way.
Let $\Phi:[0,1]\times \Rn\to \Rn$ be the flow in $\Rn$ associated with the
  controls $h_1,\ldots,h_r\in
L^2([0,1])$ of $\gamma$.
Namely, let $\Phi(t,x)=\gamma_x(t)$ where $\gamma_x:[0,1]\to \R^n$ is the
solution to the problem
\[
 \dot\gamma_x =\sum_{j=1}^r h_j X_j(\gamma_x)\quad \text{a.e.~and
$\gamma_x(0)=x\in \R^n$}.
\] 
We also let $\Phi_t(x) = \Phi(t,x)$. 
Then, dual curves $\xi:[0,1]\to T^*\Rn$ are of the form
\begin{equation}
\label{popix}
 \xi (t) = (\Phi_t^{-1})^*\xi(0),\quad t\in[0,1]
\end{equation}
for suitable $\xi(0)\neq0$. Above, $(\Phi_t^{-1})^*\xi(0)$
denotes the 
 pull-back
of $1$-forms by the diffeomorphism $\Phi_t^{-1}:\Rn\to \Rn$.
We refer to \cite{AS} for this characterization of dual curves.

Normal extremals are smooth curves. In fact, by ii) the controls satisfy 
\begin{equation}\label{acca}
h_j = - \langle \xi,X_j(\ga) \rangle\quad \text{a.e.~on }[0,1]
\end{equation}
for any $j=1,\dots,r$. This along with the adjoint equation \eqref{ADJ} implies
the $C^\infty$-smoothness of $\ga$. 
Moreover, the pair $(\gamma,\xi)$ solves the system of Hamilton's equations
\begin{equation}\label{HJ}
 \dot\gamma = \frac{\partial  H}{\partial \xi} (\gamma,\xi),
  \quad 
 \dot\xi = - \frac{\partial H}{\partial x} (\gamma,\xi),
\end{equation}
where $H$ is the Hamiltonian function
\[
  H(x,\xi)=-\frac 12 \sum_{j=1}^r \langle X_j(x),
\xi\rangle^2.
\]

For abnormal extremals we have, for any
$j=1,\ldots,r$,
\begin{equation}\label{condnecessabnormali}
\langle \xi,X_j(\ga) \rangle=0\quad \text{on }[0,1].
\end{equation}
Let us recall the definition of the end-point mapping with initial point
$x_0\in\R^n$. For any
$h\in L^2([0,1];\R^r)$, let $\ga^h$ be the solution of the problem
\[
 \dot\ga^h = \sum_{j=1}^r h_j X_j(\ga^h),\quad \ga^h(0)=x_0.
\]
The mapping $\mathcal E: L^2([0,1];\R^r)\to  \R^{n}$, $\mathcal E (h) =
\ga^h(1)$, is called 
the \emph{end-point mapping}  with initial point $x_0$.
It is well known that a  horizontal curve $\gamma$ starting from
$x_0$ with controls $h$ is an abnormal extremal if and only if
there exists $\lambda\in\R^{n}$, $\lambda\neq 0$,  
such that 
\begin{equation}\label{POX9}
    \langle d\mathcal E(h)v,\lambda\rangle =0
\end{equation}
for all $v\in L^2([0,1];\R^r)$. Here, $d\mathcal E(h)$ is the differential of
$\mathcal E$ at the point $h$.
Abnormal extremals are precisely the singular points of the end-point mapping.
Let $\mathrm{Im} \, d \mathcal{E}(h)\subset\Rn$ denote the image
of the differential $d\mathcal{E}(h):L^2([0,1];\R^r)\to\Rn$.
 
\begin{defi}\label{CORANK}
 The \emph{corank} of an abnormal extremal $\ga:[0,1]\to\Rn$ with controls $h$
is the
integer $n-\mathrm{dim}\big(
\mathrm{Im} \, d \mathcal{E}(h)\big) \geq 1$.
\end{defi}

\noindent
If $\ga$ has corank $m\geq 1$ then we have $m$ linearly independent functions
$\xi^1,\ldots,\xi^m\in\mathrm{Lip}([0,1];\Rn)$ each  solving the system
of adjoint equations \eqref{ADJ}.

The necessary condition \eqref{condnecessabnormali}
can be improved in the case of strictly abnormal minimizers.

\begin{teo}\label{teo:Goh}
Let $\ga:[0,1]\to\Rn$ be a strictly abnormal length minimizer. Then any dual
curve  $\xi\in\mathrm{Lip}([0,1],\Rn)$ satisfies
\begin{equation}\label{GOH}
\langle \xi, [X_i,X_j](\ga)\rangle = 0\quad\text{on }[0,1]
\end{equation}
for any $i,j=1,\dots, r$.
\end{teo}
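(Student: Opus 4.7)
The plan is to derive the Goh condition from the second-order necessary condition for optimality, exploiting strict abnormality in an essential way. Since $\gamma$ admits a dual curve with $\xi_0 = 0$ but no normal dual curve, the multiplier $\xi$ alone must accommodate all second-order consequences of length minimality: there is no normal multiplier available to absorb perturbative corrections.

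First I would expand the endpoint mapping at the optimal control $h$ to second order. For a perturbation $v \in L^2([0,1];\R^r)$ and small $\epsilon$, let $\gamma^\epsilon$ denote the trajectory driven by $h + \epsilon v$ and set $V(t) = \sum_{j=1}^r v_j(t) X_j$. Using the flow formula \eqref{popix}, the adjoint equation \eqref{ADJ}, and the Pontryagin identity \eqref{condnecessabnormali}, a variation-of-parameters computation yields
\[
\langle \xi(1), \gamma^\epsilon(1) - \gamma(1)\rangle = \epsilon^2 \int_0^1\!\!\int_0^t \langle \xi(t), [V(t), V(s)](\gamma(t))\rangle\,ds\,dt + o(\epsilon^2),
\]
the first-order term vanishing by \eqref{condnecessabnormali}. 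Next, I would restrict to $v \in \ker d\mathcal{E}(h)$: the left-hand side then becomes $o(\epsilon^2)$, since $\xi(1)$ annihilates $\mathrm{Im}\,d\mathcal{E}(h)$. Combined with the strict-abnormality hypothesis --- which forbids a normal covector that could otherwise compensate an indefinite Hessian of the cost $C(h) = \tfrac12 \int|h|^2\,dt$ --- this forces the quadratic form above to vanish on $\ker d\mathcal{E}(h)$; replacing $v$ by $-v$ shows that any strict sign would contradict the minimality, an argument of the type systematized in Agrachev--Sachkov \cite[Ch.~20]{AS}.

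Finally, localization yields the pointwise statement. I would use packet variations concentrated near a Lebesgue point $t_0$ of the map $t \mapsto \langle \xi(t), [X_i, X_j](\gamma(t))\rangle$: choose $v$ with only the $i$-th and $j$-th components nonzero, supported in a shrinking neighborhood of $t_0$ and with vanishing averages on suitable subintervals so that the first-order endpoint displacement lies in the kernel of $d\mathcal{E}(h)$ to leading order. Inserting such $v$ into the vanishing identity isolates $\langle \xi(t_0), [X_i, X_j](\gamma(t_0))\rangle$, giving the Goh condition for a.e.~$t_0$ and hence, by continuity of $\xi$ and $\gamma$, for every $t_0 \in [0,1]$. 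The main obstacle is the rigorous upgrade from a one-sided Hessian inequality (generic for abnormal extremals) to an equality, which is exactly what the Goh condition requires: this upgrade depends crucially on the non-existence of a normal multiplier, and making the dichotomy precise requires the finite-codimension Hessian reduction that is the technical core of the strictly-abnormal second-order analysis.
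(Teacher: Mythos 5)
The paper itself does not prove Theorem \ref{teo:Goh}: it explicitly defers to the second-order open mapping theorems of Agrachev--Sachkov \cite[Chapter 20]{AS}. So your proposal has to be judged against that standard argument, and as a reconstruction of it it has a genuine gap at the decisive step. The form you write down, $Q(v)=\int_0^1\int_0^t \langle \xi, [V(t),V(s)]\rangle\,ds\,dt$, is quadratic in $v$, hence even: replacing $v$ by $-v$ changes nothing, so that substitution cannot upgrade a one-sided inequality to an equality. Worse, the conclusion you aim for at that point --- that the whole Hessian form vanishes on $\ker d\mathcal{E}(h)$ --- is strictly stronger than the Goh condition and is false in general: the diagonal (generalized Legendre) part of the second variation has no reason to vanish, and minimality does not kill it. What the second-order analysis actually yields is an \emph{index bound}: by the second-order open mapping theorem applied to the extended endpoint map (endpoint together with the cost), a strictly abnormal minimizer forces the negative index of $Q$ restricted to $\ker d\mathcal{E}(h)$ to be finite and bounded by the corank; strict abnormality enters exactly here, to exclude a normal multiplier that would otherwise absorb the second-order terms. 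The Goh condition is then extracted by a different mechanism than the one you describe: if $\langle \xi(t_0),[X_i,X_j](\gamma(t_0))\rangle\neq 0$ for some $i\neq j$, one inserts rapidly oscillating variations supported near $t_0$ with nonzero components only in the $i$-th and $j$-th control slots and with phases chosen to exploit the antisymmetry of $[X_i,X_j]$; these produce subspaces of arbitrarily large dimension on which $Q$ is negative, contradicting the index bound. Your ``packet variations'' gesture toward this localization, but the oscillation/index engine --- the actual core of the proof --- is absent, and the one-component $v\mapsto -v$ argument cannot replace it.

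A secondary, fixable inaccuracy: the second-order expansion of the endpoint map requires transporting the fields along the flow (pull-back by $\Phi_t^{-1}$ as in \eqref{popix}); the bracket in the double integral should pair the control fields transported from times $s$ and $t$ to a common point, not $[V(t),V(s)]$ evaluated naively at $\gamma(t)$. Finally, note that the statement claims the Goh condition for \emph{every} dual curve of a strictly abnormal minimizer; any complete argument must either run the index argument for each abnormal multiplier separately or invoke the corank-uniform version of the second-order conditions, a point your sketch does not address.
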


\noindent Condition \eqref{GOH} is known as  {\em Goh condition}. Theorem
\ref{teo:Goh} can be deduced from second order open mapping theorems. We refer
to \cite[Chapter 20]{AS} for a systematic treatment of the
subject.

\begin{defi} \label{GOG-EX}
 We say that a horizontal curve $\ga:[0,1]\to\Rn$ is a \emph{Goh extremal} if
there exists a Lipschitz curve $\xi:[0,1]\to\Rn$ such that $\xi\neq0$, $\xi$
solves the adjoint equations \eqref{ADJ} and 
$\langle \xi, X_i(\ga)\rangle =\langle \xi, [X_i,X_j](\ga)\rangle =
0$  on $[0,1]$ for all $i,j=1,\ldots,r$.
\end{defi}

Goal of this paper is to integrate the system of   adjoint equations. We
shall actually integrate   an equivalent system of ordinary differential
equations. To this aim, let us complete the system of vector fields 
$X_1,\ldots,X_r$ to a frame
of $n$ linearly independent vector fields $X_1,\ldots,X_n$. 
This is always possible locally. Then there are smooth functions  $c_{ij}^k$
such that \begin{equation}\label{defalfaijk}
[X_i,X_j] = \sum_{k=1}^n c_{ij}^k X_k,\quad i,j=1,\dots,n.
\end{equation}
In Lie groups, if $X_1,\ldots,X_n$ form a basis of left invariant vector fields,
the functions $c^k_{ij}$ are constants   called {\em structure constants} of
the group.

Let $\theta_1,\ldots,\theta_n$  be the frame of $1$-forms dual to the
frame of vector fields $X_1,\dots,X_n$. In the standard basis $dx_1,\ldots,d
x_n$, we have for suitable functions $\theta_{ik}:\Rn\to\R$
\[
\theta_i=\sum_{k=1}^n \theta_{ik} dx_k.
\]
Then, for all $i,j=1,\ldots,n$, we have 
\begin{equation}\label{deltaij}
\delta_{ij} = \theta_i(X_j) 
=\sum_{k=1}^n\theta_{ik} X_{jk}.
\end{equation}
The coefficients $X_{jk}$ are defined as in \eqref{COFFI}, for all
$j=1,\ldots,n$. 
Here and hereafter, $\delta_{ij}$ is the Kronecker symbol.

Given an extremal curve $\ga$ with dual curve $\xi$, let
$\la_1,\ldots,\la_n\in\mathrm{Lip}([0,1])$ be the functions defined via the
relation  along $\gamma$ 
\begin{equation}\label{omegalambda}
\xi_1 dx_1+\dots+\xi_n dx_n = \lambda_1
\theta_1(\gamma)+\dots+\lambda_n\theta_n(\gamma).
\end{equation}
We translate the adjoint equations \eqref{ADJ} for $\xi$ into a system of
differential equations for the coordinates $\la_1,\ldots,\la_n$ of $\xi$ in the
frame
$\theta_1,\ldots,\theta_n$.
We can express $\ga$ in the standard coordinates of $\Rn$ as 
$\gamma=(\gamma_1,\dots, \gamma_n)$. In the case of Lie groups, the following
theorem is proved in \cite{GK}.

\begin{teo}\label{equazionilambda}
Assume that the   vector fields $X_1,\ldots,X_r$ satisfy
\begin{equation}\label{coordorizzX_j}
X_{jh} = \d_{jh} \qquad\text{for all  }1\leq j,h\leq r.
\end{equation}
Then the functions $\xi_1,\ldots,\xi_n$ solve equations \eqref{ADJ} if and only
if
the functions
$\la_1,\ldots,\la_n$ satisfy the system of differential equations
\begin{equation}\label{8}
\dot\lambda_i = - \sum_{k=1}^n \sum_{j=1}^r  c_{ij}^k(\gamma) \dot \gamma_j 
\lambda_k\quad\text{a.e.~on }[0,1],
\end{equation}
for any $i=1,\dots,n$.
\end{teo}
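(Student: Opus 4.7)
My plan is to change frames from the coordinate coframe $dx_1,\dots,dx_n$ to the dual coframe $\theta_1,\dots,\theta_n$ via the pointwise duality $\theta_i(X_j)=\delta_{ij}$, differentiate the resulting identity along $\gamma$, and recognize the structure constants $c^k_{ij}$ on the right-hand side. Concretely, pairing \eqref{omegalambda} with $X_i(\gamma(t))$ and using \eqref{deltaij} yields
\[
\lambda_i(t) \;=\; \langle \xi(t), X_i(\gamma(t))\rangle \;=\; \sum_{k=1}^n \xi_k(t)\, X_{ik}(\gamma(t)).
\]
Since the matrix $\bigl(X_{ik}(\gamma(t))\bigr)$ is pointwise invertible (the $X_i$ form a frame), this is a linear bijection between the two coordinate representations, so it suffices to establish one implication; the converse then follows by invertibility and the uniqueness of solutions of the linear ODE.

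I would next differentiate the identity above in $t$, inserting $\dot\xi_k$ from the adjoint equation \eqref{ADJ} and writing $\dot\gamma_l = \sum_{j=1}^r h_j X_{jl}(\gamma)$ for the term arising from $\frac{d}{dt}X_{ik}(\gamma(t))$. After relabelling indices, the two contributions combine so that the coefficient of $h_j\,\xi_m$ becomes precisely
\[
X_j(X_{im})(\gamma) - X_i(X_{jm})(\gamma),
\]
where each $X_l$ is viewed as a first-order differential operator acting on the smooth function $X_{\cdot\, m}$.

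The key algebraic step is then to apply the commutator to the coordinate function $x_m$: from \eqref{defalfaijk} one obtains
\[
X_i(X_{jm}) - X_j(X_{im}) \;=\; \sum_{k=1}^n c_{ij}^k\, X_{km}.
\]
Substituting this and collapsing the sum over $m$ by means of $\lambda_k = \sum_m \xi_m X_{km}(\gamma)$ produces the intrinsic identity
\[
\dot\lambda_i \;=\; -\sum_{j=1}^r\sum_{k=1}^n h_j\, c_{ij}^k(\gamma)\, \lambda_k.
\]
Finally, hypothesis \eqref{coordorizzX_j} forces $\dot\gamma_j = \sum_{l=1}^r h_l X_{lj}(\gamma)= h_j$ for $j=1,\dots,r$ along any horizontal curve, so the displayed equation coincides with \eqref{8}.

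I expect the main obstacle to be nothing more than careful index bookkeeping in the differentiation step; the conceptual content is concentrated in the coordinate form of the commutator identity above, which is what converts derivatives of the coefficients $X_{jk}$ into the structure constants $c^k_{ij}$. It is worth noting that the hypothesis \eqref{coordorizzX_j} enters only at the very end, to replace the controls $h_j$ by the derivatives $\dot\gamma_j$; the intermediate identity for $\dot\lambda_i$ in terms of $h_j$ and $c^k_{ij}$ holds in general.
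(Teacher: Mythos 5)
Your argument is correct and follows essentially the same route as the paper's proof: you differentiate the frame-change relation along $\gamma$, insert the adjoint equation \eqref{ADJ}, and convert $X_jX_{im}-X_iX_{jm}$ into structure constants via \eqref{defalfaijk}, which is exactly the paper's key step; the only cosmetic difference is that you differentiate $\lambda_i=\langle\xi,X_i(\gamma)\rangle$ directly, so the auxiliary identities \eqref{deij2}--\eqref{dede1} for the coframe coefficients $\theta_{ik}$ are not needed. Your treatment of the converse, via pointwise invertibility of the matrix $(X_{ik}(\gamma(t)))$ together with uniqueness for the linear system \eqref{8}, is a valid substitute for the paper's observation that the same computations run backwards.
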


\begin{proof} Let $h:[0,1]\to \R^r $ be the controls of
$\ga$:
\[
\dot\gamma= h_1 X_1(\gamma)+\dots+h_r X_r(\gamma)\quad\text{a.e.~on }[0,1].
\]
By \eqref{coordorizzX_j} we have $\dot\gamma_j=h_j$ a.e.~on $[0,1]$, 
$j=1,\dots,r$.
On the one hand, by differentiating the identity
\[
\xi_ k = \sum_{i=1}^ n \la_i \theta_{ik}(\ga),
\]
we obtain
\begin{align*}
\dot\xi_k =\,& \sum_{i=1}^n \dot\lambda_i\theta_{ik} + \lambda_i\langle
\nabla\theta_{ik}    (\gamma)   ,\dot\gamma\rangle\\
=\,&\sum_{i=1}^n \bigg(\dot\lambda_i\theta_{ik} + \lambda_i\Big\langle
\nabla\theta_{ik}(\gamma),\sum_{j=1}^r h_j X_j(\gamma)\Big\rangle\bigg)\\
=\,&\sum_{i=1}^n \bigg(\dot\lambda_i\theta_{ik} + \lambda_i\sum_{j=1}^r h_j\:
X_j\theta_{ik}(\gamma)\bigg)\,.\\
\intertext{On the other hand, equation \eqref{ADJ} reads}
\dot \xi_k =\,& - \sum_{j=1}^r\sum_{i=1}^n h_j \frac{\de X_{ji}}{\de
x_k}(\gamma)
\sum_{h=1}^n \lambda_h \theta_{hi}(\gamma).
\end{align*}
Taking the difference of the previous two identities, we get
\[
0 = \sum_{i=1}^n \dot\lambda_i\theta_{ik} + \sum_{i=1}^n\sum_{j=1}^r\lambda_i
h_j\, X_j\theta_{ik}(\gamma) + \sum_{j=1}^r\sum_{i=1}^n\sum_{h=1}^n h_j
\frac{\de
X_{ji}}{\de x_k}(\gamma)  \lambda_h \theta_{hi}(\gamma),
\]
for any $k=1,\dots,n$. It follows that for any $p=1,\dots,n$ we have
\[
0 = \sum_{k=1}^n X_{pk}\left(\sum_{i=1}^n \dot\lambda_i\theta_{ik} +
\sum_{i=1}^n\sum_{j=1}^r\lambda_i h_j\, X_j\theta_{ik}(\gamma) +
\sum_{j=1}^r\sum_{i,h=1}^n h_j \frac{\de X_{ji}}{\de x_k}(\gamma)  \lambda_h
\theta_{hi}(\gamma)\right).
\]
Notice now that, by \eqref{deltaij},
\begin{equation}\label{deij2}
0 = \sum_{k=1}^n X_{pk} (X_j\theta_{ik}) + \sum_{k=1}^n (X_jX_{pk})
\theta_{ik}\:;
\end{equation}
moreover,
\begin{equation}\label{dede1}
\sum_{k=1}^n X_{pk} \frac{\de X_{ji}}{\de x_k}(\gamma) = (X_pX_{ji})(\gamma).
\end{equation}
Using \eqref{deltaij}, \eqref{deij2}, and \eqref{dede1}, we obtain
\[
\begin{split}
0 =\,& \dot\lambda_p - \sum_{i,k=1}^n\sum_{j=1}^r\lambda_i
h_j\,(X_jX_{pk})(\gamma)\, \theta_{ik}(\gamma) + \sum_{j=1}^r\sum_{i,h=1}^n
\lambda_hh_j (X_pX_{ji})(\gamma)\, \theta_{hi}(\gamma)\\
=\,& \dot\lambda_p - \sum_{i,k=1}^n\sum_{j=1}^r\lambda_i
h_j\,(X_jX_{pk})(\gamma)\, \theta_{ik}(\gamma) + \sum_{j=1}^r\sum_{k,i=1}^n
\lambda_i h_j (X_pX_{jk})(\gamma)\, \theta_{ik}(\gamma)\\
=\,& \dot\lambda_p + \sum_{i,k=1}^n\sum_{j=1}^r\lambda_i h_j
\theta_{ik}(\gamma)\big(X_pX_{jk} -  X_jX_{pk}\big)(\gamma).
\end{split}
\]
From \eqref{defalfaijk} we deduce that
\[
(X_pX_{jk} -  X_jX_{pk})(\gamma)  = \sum_{\ell=1}^n c_{pj}^\ell(\gamma) 
X_{\ell k}(\gamma),
\] 
and hence, by \eqref{deltaij}, we have
\[
0 = \dot\lambda_p + \sum_{i,k,\ell=1}^n\sum_{j=1}^r\lambda_i h_j
c_{pj}^\ell (\gamma)\theta_{ik}(\gamma) X_{\ell k}(\gamma)=\, \dot\lambda_p +
\sum_{i=1}^n\sum_{j=1}^r\lambda_i h_j c_{pj}^i(\gamma). 
\]
This is equation \eqref{8}. The same computations show that \eqref{ADJ} follows
from \eqref{8}.
\end{proof}


\section{Hall basis theorem for free nilpotent Lie algebras} \label{HALLE}
In this section, we develop the algebraic tools needed in Section \ref{agio} to
integrate the system of differential equations \eqref{8} in free
nilpotent  Lie groups. In groups, the structure functions $c_{ij}^k$
appearing in \eqref{defalfaijk} and \eqref{8} are in fact constants, as soon as
the vector fields $X_1,\ldots, X_n$ are left invariant. The technical
difficulty is to make transparent how the Jacobi identity enters the
integration process. The algebraic preliminaries for this integration are fixed
in Lemma \ref{Roberto}.

Let $\mathfrak g$ be a free  nilpotent  real Lie algebra of dimension $n$,
step $s\geq 2$ and rank $r\geq 2$. The algebra $\mathfrak g$ admits a 
stratification
\[
 \mathfrak g = \mathfrak g_{1} \oplus \cdots \oplus \mathfrak g_{s},
\]
where $\mathfrak g_{i+1} = [\mathfrak g_1,\mathfrak g_i]$ for   $i=1,\ldots,
s-1$ and
$\mathfrak g_i=\{0\}$ for $i>s$. The first layer $\mathfrak g_1$ has dimension
$r$.

We recall the algorithm for the construction of a basis for $\mathfrak g$ due to
M.~Hall Jr.~\cite{Hall}.
Let $X_1,\ldots,X_r$ be a basis for $\mathfrak 
g_1$. To each $X_i$ we assign the degree $d(i) = \deg(X_i)=1$ for
$i=1,\ldots,r$. By induction, we complete $X_1,\ldots,X_r$, to a basis
$X_1,\ldots,X_n$ for $\mathfrak g$, and we assign to each $X_j$ a degree $d(j)
= \deg(X_j) \in \{1,\ldots,s\}$. Let us assume that the elements
$X_1,\ldots.X_k$, with degree at most $d-1$, are already defined and that they
are ordered with the  property:
\[
d(i)\leq d-1,\ d(j)\leq d-1,\text{ and }d(i) < d(j)\ \Rightarrow\ i<j.
\]
By definition,  the commutator $[X_i,X_j]$ is an element of the
Hall basis of degree $d$ if:
\begin{subequations}
\begin{align}
&i>j,\\
&X_i \text{ and }X_j\text{  are elements of the basis of degree }\leq d-1,\\
 &d(i)+d(j)=d,\\
  &\text{if } X_i \text{ was constructed as }[X_h,X_k],
  \text{ then }k\leq j.  
\end{align}
\end{subequations}
Hall proved that this algorithm produces a    basis $X_1,\dots,X_n$ for
$\mathfrak g$. The basis is ordered by subindices in such a way that $d(i) <
d(j)$ implies $i<j$.

Following the genesis of the basis, it is easy to see by induction on the degree
that, for any $\ell=1,\dots,n$ there exists a unique 
string  of indices ${\ell_0}, {\ell_1}, {\ell_2},   \dots,{\ell_h}\in
\{1,\ldots,n\}$, with $h\in \{0,\ldots, s-1\}$,
such that 
\begin{equation}\label{1}
X_\ell = [\cdots[[[X_{\ell_0},X_{\ell_1}],X_{\ell_2}],\dots],X_{\ell_h}]
\end{equation}
 and 
\begin{subequations}\label{2}
\begin{align}
& \ell_0>\ell_1,\label{2.1}\\
& d(\ell_0)=d(\ell_1)=1\label{2.2},\\
& \ell_1\leq \ell_2\leq\ldots\leq \ell_h,\label{2.3}\\
& \text{if }
X_k=[\cdots[[[X_{\ell_0},X_{\ell_1}],X_{\ell_2}],\dots],X_{\ell_{i-1}}], \text{
with } i=2,\ldots, h, \text{ then } \ell_i< k.\label{2.4}
\end{align}
\end{subequations}

We need some more notation. Let us first introduce the set of multi-indices
$\I= \N^n$.
For any  $\alpha\in \I$, we let $|\alpha| = \alpha_1+\ldots+\alpha_n$ and 
$u_\alpha=\max\{i:\alpha_i\neq 0\}$. Let us agree that $\alpha\leq\beta$
for $\alpha,\beta\in\I$ means 
$\alpha_i\leq\beta_i$ for all $i=1,\ldots, n$.

Fix a Hall basis $X_1,\ldots,X_n$.
For ${j_0}, {j_1}, {j_2},   \dots,{j_k}\in \{1,\ldots,n\}$, we let
\begin{equation}\label{notazione 1}
 [X_{j_0},X_{j_1},X_{j_2}, X_{j_3}, \dots,X_{j_k}]
 := [\cdots[[[X_{j_0},X_{j_1}],X_{j_2}],X_{j_3} ],\dots,X_{j_k}].
\end{equation}
 For 
$\alpha = (\alpha_1, \ldots,
\alpha_n)\in \I$,  we define the iterated commutator
\begin{equation}\label{commutatoreiterato}
[\cdot,X_\alpha]:= [  \cdot,\underbrace{X_1,\dots,X_1}_{\textrm{$\alpha_1$
times}},\underbrace{X_2,\dots,X_2}_{\textrm{$\alpha_2$
times}},   \dots,\underbrace{X_n,\dots,X_n}_{\textrm{$\alpha_n$
times}}].
\end{equation}
We agree that  $[\cdot,X_{(0,\ldots,0)}]={\rm Id}$.

For any $\ell\in\{1,\ldots,n\}$ there exist  unique $\ell_0\in\{2,\ldots,r\}$
and $h=h(\ell)\in \{1,\ldots,s-1\}$ such   that $X_\ell$ is given by the
representation \eqref{1} subject to \eqref{2.1}--\eqref{2.4}. For each
$\ell$ we define the
multi-index  $I(\ell)  \in\I$   as
\begin{equation}\label{multi-I}
 I(\ell)_j =    \# \{ s\geq 1: \ell_s=j\}.
\end{equation}
For example, if $X_\ell=[X_3,X_2]$, then $\ell_0=3$
and $I(\ell)=(0,1,0,\ldots,0)$. Also, if $\ell\in \{1,\ldots,r\}$ then
$\ell_0=\ell$ and 
$I(\ell)=(0,\ldots,0)$.

With this notation, we   have the following properties, for any
$\ell=1,\ldots,n$: 
\begin{equation}\label{IELLE}
X_\ell = [X_{\ell_0},X_{I(\ell)}],
\end{equation}
\begin{equation}
u_{I(\ell)}=\ell_{h(\ell)}.
\end{equation}

Moreover, given $\ell,k\in\{1,\ldots,n\}$,  we have 
\begin{equation}\label{328}
[X_\ell,X_k] \text{ is a base vector }
\iff
u_{I(\ell)}\leq k <\ell.
\end{equation}

\begin{defi}  
  \label{lop}
We say that $X_\ell=[X_{\ell_0},X_{\ell_1},X_{\ell_2},\dots,X_{\ell_h}]$ is a
direct discendant of $X_j$ if
$X_j=[X_{\ell_0},X_{\ell_1},X_{\ell_2},\dots,X_{\ell_k}]$, for some $k\in
\{0,\ldots, h\}$. In this case, we write $j\prec\ell$.
\end{defi}

\noindent The relation $\prec$ is a partial order on indices.
Notice that 
$j\prec\ell$ implies $I(j)\leq I(\ell)$,
but not viceversa.

The next combinatorial lemma plays a central role in the next section.
For any $\beta \in \I$ and $q=1,\ldots,r$, let us define the set of indices
\[
 \A_{\beta,q} = \big\{\ell \in\{1,\ldots,n\} : q\prec \ell, \,
I(\ell)\leq\beta\big\}.
\]
The statement $q\prec\ell$ is equivalent to $\ell_0=q$. We will also use the
notation
$\beta!:=\beta_1!\,\beta_2!\,\cdots\,\beta_n!$, $|\beta|
:=\beta_1+\cdots+\beta_n$, and
$\ex_k:=(\delta_{1k},\delta_{2k},\dots,\delta_{nk})=(0,\dots,1,\dots,0)$.

\begin{lemma}\label{Roberto} 
For all $\b\in\I$, $i=1,...,n$, and  $q=1,...,r$, we have 
\begin{equation}\label{RR}  
  [[X_i,X_q] ,X_\beta]
=
\sum_{ \ell\in \A_{ \beta,q } }
c_{\ell\beta}[X_i,X_{\beta-I(\ell)+\ex_\ell}],
\end{equation}
where we let  
\begin{equation}\label{cibeta}
c_{\ell\beta} = 
\dfrac{\beta!}{(\beta-I(\ell))! I(\ell)!} .
\end{equation}
\end{lemma}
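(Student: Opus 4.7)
I would argue by induction on $|\beta|$. The base case $\beta=0$ is immediate: the only $\ell$ satisfying $q\prec\ell$ and $I(\ell)\le 0$ is $\ell=q$, with $I(q)=0$ and $c_{q,0}=1$, so both sides reduce to $[X_i,X_q]$. For the inductive step, I would write any $\beta$ with $|\beta|\ge 1$ as $\beta=\beta'+\ex_k$, where $k:=u_\beta$, so that $u_{\beta'}\le k$. The definition \eqref{commutatoreiterato} then gives
\[
[[X_i,X_q],X_\beta]=\bigl[[[X_i,X_q],X_{\beta'}],X_k\bigr],
\]
because under $k\ge u_{\beta'}$ the extra $X_k$ occupies the outermost position of the iterated commutator. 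Applying the inductive hypothesis inside reduces the problem to computing $[[X_i,X_\mu],X_k]$ for each $\ell\in\A_{\beta',q}$, with $\mu:=\beta'-I(\ell)+\ex_\ell$.

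I would split on $\ell\le k$ versus $\ell>k$. If $\ell\le k$, then $u_\mu\le k$, so $X_k$ can be appended at the end and $[[X_i,X_\mu],X_k]=[X_i,X_{\beta-I(\ell)+\ex_\ell}]$. If $\ell>k$, then $\beta_\ell=0$ (because $\ell>k=u_\beta$) forces $\mu_\ell=1$ to be the top index, so $[X_i,X_\mu]=[[X_i,X_{\beta'-I(\ell)}],X_\ell]$. The Jacobi identity $[[A,X_\ell],X_k]=[A,[X_\ell,X_k]]-[X_\ell,[A,X_k]]$ then applies; the key algebraic input is that \eqref{328} combined with $u_{I(\ell)}\le u_{\beta'}\le k<\ell$ guarantees that $[X_\ell,X_k]$ is itself a basis vector $X_{\ell^\ast}$ with $\ell^\ast_0=q$ and $I(\ell^\ast)=I(\ell)+\ex_k$. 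Since $\ell,\ell^\ast>k\ge u_{\beta'-I(\ell)}$, both Jacobi summands collapse to append-type iterated commutators and, after rewriting via $\beta=\beta'+\ex_k$, I expect
\[
[[X_i,X_\mu],X_k]=[X_i,X_{\beta-I(\ell^\ast)+\ex_{\ell^\ast}}]+[X_i,X_{\beta-I(\ell)+\ex_\ell}].
\]

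The main obstacle is the combinatorial bookkeeping: for each $\nu\in\A_{\beta,q}$ I must collect all contributions to the coefficient of $[X_i,X_{\beta-I(\nu)+\ex_\nu}]$ and show they sum to $c_{\nu\beta}$. Three sources arise: (i) $\ell=\nu$ in Case A (requires $\nu\le k$); (ii) the second summand of Case B with $\ell=\nu$ (requires $\nu>k$); (iii) the first summand of Case B with $\ell^\ast=\nu$, available precisely when $u_{I(\nu)}=k$, in which case $\ell$ is the unique Hall predecessor of $\nu$ with $I(\ell)=I(\nu)-\ex_k$. Since Hall's construction forces $\nu>u_{I(\nu)}$, exactly one of (i),(ii) fires, contributing $c_{\nu\beta'}$; source (iii) adds $c_{\ell\beta'}$ when $u_{I(\nu)}=k$; and in the residual case $\nu\notin\A_{\beta',q}$ (equivalently $I(\nu)_k=\beta_k$) only (iii) fires. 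A short multinomial check, equivalent to the Pascal-type identity $\binom{\beta'}{I(\nu)}+\binom{\beta'}{I(\nu)-\ex_k}=\binom{\beta}{I(\nu)}$, verifies $c_{\nu\beta'}+c_{\ell\beta'}=c_{\nu\beta}$ in the doubly contributing case, $c_{\nu\beta'}=c_{\nu\beta}$ when $I(\nu)_k=0$, and $c_{\ell\beta'}=c_{\nu\beta}$ in the new-term case; spurious $\nu$ with $u_{I(\nu)}>k$ are ruled out by $\nu\in\A_{\beta,q}\Rightarrow u_{I(\nu)}\le u_\beta=k$, closing the induction.
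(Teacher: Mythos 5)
Your proposal is correct and follows essentially the same route as the paper's proof: induction on $|\beta|$ peeling off the top index $k=u_\beta$, the split into $\ell\leq k$ and $\ell>k$, the Jacobi identity combined with the fact that $[X_\ell,X_k]$ is a Hall basis element with $I(\ell.k)=I(\ell)+\ex_k$, and the Pascal-type multinomial identity to match coefficients. The only cosmetic differences are that you start the induction at $\beta=0$ (absorbing the paper's $|\beta|=1$ base case into the inductive step) and organize the bookkeeping by collecting contributions per target index $\nu$ rather than by reindexing the sums, which amounts to the same computation.
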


\begin{proof}  
The case $\beta=0$ is straightforward. The proof is by induction on 
$|\beta|\geq 1$.

 \medskip

\textit{Base of induction.}  Assume $|\beta|=1$, i.e.,
$\beta=\ex_k$, for some 
$k\in\{1,2,\ldots,n\}$.
Hence,
$\beta!=1$. Let $\ell\in  \A_{ \beta,q } $.
Since  $I(\ell)\leq \beta$, then
either
$
I(\ell)=(0,\ldots,0)$ or $
I(\ell)=\ex_k$.
In both cases we have $c_{\ell\beta}=1$.
Since  $\ell_0=q$, then $X_\ell$ is either $X_q$, in which case
$\beta-I(\ell)+\ex_\ell=\ex_k+\ex_q$, or
$X_\ell=[X_q, X_k]$, in which case
$\beta-I(\ell)+\ex_\ell
=\ex_\ell$.
Notice that the latter case is allowed only if $k<q $.

Therefore,  equation \eqref{RR}
reduces to a trivial identity when 
$k\geq q$, whereas when $k<q$ it reduces to 
$$
[ [X_i,X_q],X_k] =   [[X_i,X_k],X_q]  + [X_i,[X_q,X_k]],
$$ 
which is true by the Jacobi identity.

\medskip

\textit{Inductive step.} Using induction, we prove the claim for any 
$\widetilde \beta\in \I$. We write $\widetilde\beta=\beta+\ex_k$ for some
$\beta\in \I\setminus\{0\}$ and $k=u_{  \widetilde\beta }  \in\{1,2,\ldots,n\}$.
Hence we have
 $u_\beta\leq k$ and $|\beta|<|\widetilde\beta|$. By induction, formula
\eqref{RR}
holds for $\beta$ and thus:
\[   [[X_i,X_q] ,X_{\widetilde\beta}] 
 = 
\sum_{\ell\in\A_{\beta,q} } c_{\ell\beta}
[[X_i,X_{\beta-I(\ell)+\ex_\ell}],X_k].
\]

We split the sum into two sums according to whether $\ell\leq k$ or $\ell >k$.
In the case when $\ell\leq k$,  the coordinates of the multi-index
$\beta-I(\ell)+e_\ell$
 from the $(k+1)$-th onward are null and hence, 
\[ 
 [[X_i,X_{\beta-I(\ell)+\ex_\ell}],X_k]= 
[X_i,X_{\beta-I(\ell)+\ex_\ell+\ex_k}].
\]
In the case  $\ell >k$, also because  $u_{I(\ell)}\leq k$, the commutator
$[X_\ell, X_k]$ is an element of the Hall basis, see \eqref{328}. We denote
this element by  $X_{\ell.k}= [X_\ell, X_k]$. Then by the  Jacobi identity we
obtain
\[\begin{split}
[[X_i,X_{\beta-I(\ell)+\ex_\ell}],X_k]&=[[[X_i,X_{\beta-I(\ell)}],
X_\ell],X_k]\\
&=
[[[X_i,X_{\beta-I(\ell)}], X_k],X_\ell] +[[X_i,X_{\beta-I(\ell)}], [X_\ell,X_k]]
\\
&=
[X_i,X_{\beta-I(\ell)+\ex_k+\ex_\ell}] +[X_i,X_{\beta-I(\ell)+\ex_{\ell.k}}]. 
\end{split}
\]
To conclude the proof, we need to show that the sum
\begin{equation}\label{RHS}
\sum_{\ell\in\A_{\beta,q} } c_{\ell\beta}
[X_i,X_{\beta-I(\ell)+\ex_\ell+\ex_k}]
+
\sum_{ \ell\in\A_{\beta,q}, \ell>k } c_{\ell\beta} 
[X_i,X_{\beta-I(\ell)+\ex_{\ell.k}}] 
 \end{equation}
is equal to $\sum_{  \ell\in \A_{\beta+\ex_k,q}} c_{\ell,\beta+\ex_k}  
[X_i,X_{\beta+\ex_k-I(\ell)+\ex_\ell}]$, i.e., to
\begin{equation}\label{LHS}
\sum_{\substack{  \ell\in \A_{\beta+\ex_k,q}\\I(\ell)_k\leq\beta_k}}
c_{\ell,\beta+\ex_k}  [X_i,X_{\beta+\ex_k-I(\ell)+\ex_\ell}] 
+ 
\sum_{\substack{  \ell\in \A_{\beta+\ex_k,q}\\I(\ell)_k=\beta_k+1}}
c_{\ell,\beta+\ex_k}  [X_i,X_{\beta+\ex_k-I(\ell)+\ex_\ell}]\,.
\end{equation}
For fixed $\beta$, we introduce the notation
\[
\Phi(\ell)= c_{\ell\beta} [X_i,X_{\beta+\ex_k-I(\ell)+\ex_\ell}],
\] 
that is well defined if $I(\ell)_k\leq\beta_k$. 

Let us rewrite \eqref{RHS}. In the second summation in \eqref{RHS}, we perform
the change of indices
$\widetilde\ell=\ell.k$. Then we have $I(\widetilde \ell) = I(\ell)+\ex_k$ and
the summation becomes
\[
\begin{split}
&\sum_{\substack{\widetilde \ell\in \A_{\beta+\ex_k,q}\\ I(\widetilde\ell)_k\geq
1 }}
 \dfrac{\beta!}{(\beta+\ex_k-I(\widetilde\ell))! (I(\widetilde\ell)-\ex_k)!}
[X_i,X_{\beta+\ex_k-I(\widetilde\ell)+\ex_{\widetilde\ell}}] \\
= &
\sum_{\substack{\widetilde \ell\in \A_{\beta+\ex_k,q}\\ 1\leq
I(\widetilde\ell)_k\leq \beta_k }}
 \dfrac{I(\widetilde\ell)_k}{\beta_k+1-I(\widetilde\ell)_k } 
\Phi(\widetilde\ell)
+
\sum_{\substack{\widetilde \ell\in \A_{\beta+\ex_k,q}\\  I(\widetilde\ell)_k =
\beta_k +1}}
\frac{\beta!}{(\beta+\ex_k-I(\widetilde\ell))!(I(\widetilde\ell)-\ex_k)!}[X_i,X_
{\beta+\ex_k-I(\widetilde\ell)+\ex_{\widetilde\ell}}]
. 
\end{split}
\]
Therefore, the sum in \eqref{RHS} is 
\begin{equation}\label{stel}
\begin{split}
\eqref{RHS} =\ & \sum _{\ell\in\A_{\beta,q}}
   \Phi(\ell)
 +
 \sum_{\substack{\ell\in \A_{\beta+\ex_k,q}\\ 1\leq I(\ell)_k\leq \beta_k }}
 \dfrac{I(\ell)_k}{\beta_k+1-I( \ell)_k }  \Phi( \ell)\\
& +
 \sum_{\substack{  \ell\in \A_{\beta+\ex_k,q}\\I(\ell)_k=\beta_k+1}} 
\frac{\beta!}{(\beta+\ex_k-I(\ell))!(I(\ell)-\ex_k)!} 
[X_i,X_{\beta+\ex_k-I(\ell)+\ex_\ell}]. 
\end{split}
\end{equation}
We split the first summation in \eqref{stel} according to whether
$I(\ell)_k=0$ or not.
Notice that $\ell\in \A_{\beta,q}$ is equivalent to $\ell \in
\A_{\beta+\ex_k,q}$ and $I(\ell)_k \leq \beta_k$. Then we have
\[
\begin{split}
 \sum _{\ell\in\A_{\beta,q}}   \Phi(\ell)  = &
 \sum _{\substack{ \ell\in\A_{\beta+\ex_k,q}\\ I(\ell)_k = 0}} \Phi(\ell)
 +    \sum _{\substack{ \ell\in\A_{\beta+\ex_k,q}\\1\leq I(\ell)_k\leq\beta_k}}
   \Phi(\ell)\\
= & \sum _{\substack{ \ell\in\A_{\beta+\ex_k,q}\\ I(\ell)_k = 0}} \dfrac{\beta_k
+ 1}{\beta_k+1-I( \ell)_k }\Phi(\ell)
 +  \sum _{\substack{ \ell\in\A_{\beta+\ex_k,q}\\1\leq I(\ell)_k\leq\beta_k}}
   \Phi(\ell)
\end{split}
\]
and, by \eqref{stel}, we obtain
\[
\begin{split}
\eqref{RHS} =\ & 
 \sum _{\substack{ \ell\in\A_{\beta+\ex_k,q}\\ I(\ell)_k = 0}} \dfrac{\beta_k +
1}{\beta_k+1-I( \ell)_k }\Phi(\ell)
 +    \sum _{\substack{ \ell\in\A_{\beta+\ex_k,q}\\1\leq I(\ell)_k\leq\beta_k}}
\Phi(\ell)
 +
 \sum_{\substack{\ell\in \A_{\beta+\ex_k,q}\\ 1\leq I(\ell)_k\leq \beta_k }}
 \dfrac{I(\ell)_k}{\beta_k+1-I( \ell)_k }  \Phi( \ell)\\
 &+\sum_{\substack{  \ell\in \A_{\beta+\ex_k,q}\\I(\ell)_k=\beta_k+1}} 
\frac{\beta!}{(\beta+\ex_k-I(\ell))!(I(\ell)-\ex_k)!} 
[X_i,X_{\beta+\ex_k-I(\ell)+\ex_\ell}]\,.
\end{split}
\]
Hence, the sum in \eqref{RHS} is
\[
\begin{split}
 \eqref{RHS} =\ &
 \sum_{\substack{\ell\in \A_{\beta+\ex_k,q}\\ 0\leq I(\ell)_k\leq \beta_k }}
 \dfrac{\beta_k + 1}{\beta_k+1-I( \ell)_k }  \Phi( \ell)\\
& +
 \sum_{\substack{  \ell\in \A_{\beta+\ex_k,q}\\I(\ell)_k=\beta_k+1}} 
\frac{\beta!}{(\beta+\ex_k-I(\ell))!(I(\ell)-\ex_k)!} 
[X_i,X_{\beta+\ex_k-I(\ell)+\ex_\ell}]
\end{split}
\]
and it can be easily checked that the right hand side of the previous formula
equals \eqref{LHS}. This concludes the proof.
\end{proof}

\medskip

Let  $\mathfrak f_{s,r}$ denote   the free nilpotent  Lie algebra over $\R$
of step $s$ and rank $r$. Let $n =\mathrm{dim}(\mathfrak f_{r,s})$ be the
dimension of the algebra.
Let us fix a Hall basis for $\mathfrak f _{s,r}$. Then
for any
$\ell\in\{1,\ldots,n\}$ we have a  multi-index  $I(\ell)\in\I$,
see \eqref{multi-I}, and moreover there is a partial order $\prec$ on indices,
see Definition \ref{lop}.
Finally, recall the notation $x^\alpha = x_1^{\alpha_1}\cdot\ldots\cdot
x_n^{\alpha_n}$
for $x\in\Rn$ and $\alpha\in\I$.

The following theorem is
proved by M. Grayson and R. Grossman, see \cite[Theorem 2.1]{GrGr}.

\begin{teo}\label{cerchio}
 The vector fields $X_1,\ldots,X_r$ in $\Rn$
\begin{equation}
 \label{quadrato}
  X_i(x) = \sum_{\ell : i\prec\ell} \frac{(-1)^{|I(\ell)|} } {I(\ell)!}
x^{I(\ell)}
\frac{\partial }{\partial x_{\ell}},\quad x\in\Rn,
\end{equation}
with $i=1,\ldots,r$, generate a Lie algebra isomorphic to $\mathfrak f_{s,r}$.
\end{teo}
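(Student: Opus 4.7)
The plan is to combine the universal property of $\mathfrak f_{s,r}$ with an explicit inductive computation of iterated commutators, using Lemma \ref{Roberto} as the key combinatorial input. Denote the vector fields in \eqref{quadrato} by $D_1,\dots,D_r$ and let $\mathfrak h\subset\mathrm{Vec}(\Rn)$ be the Lie subalgebra they generate. I first check that $\mathfrak h$ is nilpotent of step at most $s$: each $D_i$ is a polynomial vector field whose coefficients $x^{I(\ell)}$ have degree $|I(\ell)|\leq s-1$ (since Hall basis elements have degree at most $s$), and a direct inspection shows that brackets strictly decrease the polynomial degree of their nonzero summands, so any iterated bracket of length $s+1$ vanishes identically. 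By the universal property of $\mathfrak f_{s,r}$, the assignment $X_i\mapsto D_i$ extends to a surjective Lie algebra homomorphism $\rho:\mathfrak f_{s,r}\twoheadrightarrow\mathfrak h$.

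The core step is to prove, by induction on the degree $d(\ell)$, the explicit formula
\[
\rho(X_\ell)(x)=\sum_{m\,:\,\ell\prec m}\frac{(-1)^{|I(m)-I(\ell)|}}{(I(m)-I(\ell))!}\,x^{I(m)-I(\ell)}\frac{\partial}{\partial x_m},\qquad\ell=1,\dots,n.
\]
The base case $d(\ell)=1$ is \eqref{quadrato} itself. For the inductive step I would write $X_\ell=[X_{\ell_0},X_{I(\ell)}]$ and compute $[\rho(X_{\ell_0}),\rho(X_{I(\ell)})]$ by applying the Leibniz rule to the polynomial expressions supplied by the inductive hypothesis. Each cross-differentiation of one monomial against another produces a multinomial weight which, after reindexing along the partial order $\prec$, is matched precisely by the coefficients $\beta!/((\beta-I(k))!\,I(k)!)$ appearing in Lemma \ref{Roberto}. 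That lemma is exactly the algebraic rewriting needed to collapse the non-Hall iterated brackets $[X_{\ell_0},X_{\beta-I(k)+\ex_k}]$ produced in the calculation back into Hall normal form, with the right multiplicities, thereby reconciling the concrete polynomial computation with the abstract structure of $\mathfrak f_{s,r}$.

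Once this formula is established, every $\rho(X_\ell)$ has leading term $\partial/\partial x_\ell$ at the origin (the summand $m=\ell$, for which $I(m)-I(\ell)=0$), so $\{\rho(X_\ell)\}_{\ell=1}^n$ is linearly independent in $\mathrm{Vec}(\Rn)$. Hence $\dim\mathfrak h\geq n=\dim\mathfrak f_{s,r}$, which forces $\rho$ to be injective and completes the proof. The main obstacle is the inductive step above: the raw bracket calculation yields iterated commutators indexed by arbitrary sequences, and one must rewrite them in Hall normal form while tracking signs and factorials carefully. Lemma \ref{Roberto} is purpose-built to perform this reshuffling, but verifying that its multinomial weights align exactly with the factorial denominators $1/(I(m)-I(\ell))!$ coming from the polynomial Leibniz rule is delicate; this is where the Jacobi identity silently enters the argument.
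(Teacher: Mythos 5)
The skeleton you chose (universal property giving a surjection $\rho:\mathfrak f_{s,r}\to\mathfrak h$, then a dimension count via linear independence of the $\rho(X_\ell)$) is the standard route -- note the paper itself does not prove this theorem but cites Grayson--Grossman. However, your core inductive step rests on a closed formula that is false: it is not true that $\rho(X_\ell)=\sum_{m:\,\ell\prec m}\frac{(-1)^{|I(m)-I(\ell)|}}{(I(m)-I(\ell))!}\,x^{I(m)-I(\ell)}\,\partial/\partial x_m$ for all $\ell$. Already in rank $2$, step $4$ (Hall basis $X_3=[X_2,X_1]$, $X_4=[X_3,X_1]$, $X_5=[X_3,X_2]$, $X_6=[X_4,X_1]$, $X_7=[X_4,X_2]$, $X_8=[X_5,X_2]$, so $n=8$), a direct computation from \eqref{quadrato} gives
\begin{equation*}
\rho(X_5)=[\rho(X_3),\rho(X_2)]=\frac{\partial}{\partial x_5}-x_1\frac{\partial}{\partial x_7}-x_2\frac{\partial}{\partial x_8},
\end{equation*}
whereas your formula predicts $\partial/\partial x_5-x_2\,\partial/\partial x_8$, because $5\not\prec 7$ even though $I(5)\leq I(7)$ (this is exactly the caveat after Definition \ref{lop}: $I(j)\leq I(\ell)$ does not imply $j\prec\ell$). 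The extra term is not a slip you can absorb: if $\rho(X_5)$ had no $x_1\,\partial/\partial x_7$ term, then $[\rho(X_5),\rho(X_1)]=0$, while in $\mathfrak f_{s,r}$ one has $[X_5,X_1]=[X_4,X_2]=X_7\neq0$ by Jacobi; so your formula is incompatible with the very injectivity you want. The correct structural statement is precisely the Grayson--Grossman lemma quoted right after the theorem, i.e.\ \eqref{tri}: besides the monomials $P_{\ell m}$ with $\ell\prec m$ there are correction terms $Q_{\ell m}\,\partial/\partial x_m$ on non-descendant directions, controlled only through their minimal order, and proving that is the real content. Your induction as written cannot close, and Lemma \ref{Roberto} does not repair it: it is an identity between iterated commutators in the abstract algebra, used in the paper for Theorem \ref{teointduale} and Proposition \ref{propderivpolinomi}, not a Hall-normal-form rewriting of the vector-field computation above.

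Two further points. Your nilpotency argument ("brackets strictly decrease the polynomial degree") is not valid for ordinary degree (a bracket of fields with coefficient degrees $p,q$ only has degree $\leq p+q-1$, which can exceed $\max(p,q)$); the clean argument is that each generator in \eqref{quadrato} is homogeneous of degree $-1$ for the anisotropic dilations $x_\ell\mapsto\lambda^{d(\ell)}x_\ell$, so any bracket of length $s+1$ has coefficients of negative weighted degree and vanishes. Finally, once a statement of the type \eqref{tri} is available your last step does work -- all $P_{\ell m}$ with $m\neq\ell$ and all $Q_{\ell m}$ vanish at the origin, so $\rho(X_\ell)(0)$ is the $\ell$-th coordinate vector and the $\rho(X_\ell)$ are linearly independent, forcing $\dim\mathfrak h=n$ and injectivity of $\rho$ -- so the gap is concentrated entirely in the inductive structural formula.
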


\noindent 
The index $\ell=i$ is included in the sum in \eqref{quadrato} and gives the
summand
$\partial/\partial x_i$. In \cite{GrGr}, the authors use a slightly different
notation. Moreover, there is no $\ell\neq 1$ such that $1\prec
\ell$ and thus $X_1= \partial /\partial x_1$. Finally, notice that
$X_1,\ldots,X_r$ satisfy  assumption \eqref{coordorizzX_j}.

By Hall's construction, the generators $X_1,\ldots,X_r$ can be completed to
a basis $X_1,\ldots,X_n$ of the Lie algebra. We call 
 $X_1,\ldots,X_n$ a {\em Hall-Grayson-Grossman basis} of vector fields in $\Rn$.

In \cite{GrGr}, the authors also describe the elements $X_j$ with $j>r$.
For $\alpha\in\I$ with $\alpha\neq 0$, define the {\em minimal order} of the
monomial
$x^\alpha$ as
\[
m(x^\alpha)=\min\big\{ j\in\{1,\ldots,n\} : \alpha_j>0\big\}.
\]
For a polynomial $P(x) = \sum_{h=1}^N c_ h x^{\alpha_h}$ with $c_h\neq0$ and
$\alpha_h\in\I$, $\alpha_h\neq0$, we define the minimal order $m(P) =
\max_{h=1,\ldots,N} m(x^{\alpha_h})$.

For $i,\ell\in\{1,\ldots,n\}$ with $i\prec\ell$, let us define the monomial
\begin{equation}
 \label{star}
  P_{i\ell}(x) = \frac{(-1)^{|I(\ell)|-|I(i)|}}{(I(\ell)-I(i))!} 
x^{I(\ell)-I(i)},\quad x\in\Rn.
\end{equation}
Notice that \eqref{quadrato} can be rewritten as
\[
X_i(x)=\sum_{\ell:i\prec \ell} P_{i\ell}(x)\frac{\partial}{\partial
x_\ell},\qquad i=1,\dots,r.
\]

\begin{lemma}{\rm(}\cite[Lemma 2.3]{GrGr}{\rm)}
If $X_i$ is an element of the Hall basis constructed as $X_i = [X_j,X_k]$, then 
\begin{equation}
 \label{tri}
  X_i(x) = \sum_{\ell : i\prec\ell} P_{i\ell}(x)\frac{\partial}{\partial
x_\ell} 
 + \sum_{\ell=1}^n Q_{i\ell}(x) \frac{\partial}{\partial x_\ell},\quad x\in\Rn,
\end{equation}
where $Q_{i\ell}$ are polynomials with no constant terms satisfying
$m(Q_{i\ell})<k$ and $P_{i\ell}$ are monomials of the form \eqref{star}
satisfying $k\leq m (P_{i\ell})<i$ for all $\ell\neq i$.
\end{lemma}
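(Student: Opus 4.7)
The plan is to prove the lemma by induction on the degree $d(X_i)\geq 2$ of the Hall basis element. For the base case $d(X_i)=2$ the generators $X_j$ and $X_k$ both lie in $\{X_1,\ldots,X_r\}$ with $j>k$, so their explicit form is given by \eqref{quadrato} and neither carries any $Q$-term. A direct computation of $[X_j,X_k]$ using this formula produces, for each $\partial_{x_\ell}$-component, a polynomial that can be split into the monomial $P_{i\ell}(x)$ (when $i\prec\ell$) and the remainder $Q_{i\ell}(x)$. The minimal-order bounds follow from the explicit shape of the monomials in \eqref{quadrato}: the variables appearing in $P_{j\ell}$ are those in the support of $I(\ell)$, and by the Hall conditions \eqref{2.3}--\eqref{2.4} they are bounded below by $k$ and above by $\ell-1$.

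For the inductive step $d(X_i)\geq 3$ the Hall criterion gives $X_i=[X_j,X_k]$ with $j>k$, $d(j)+d(k)=d(X_i)$, and, whenever $X_j=[X_h,X_{k'}]$, also $k'\leq k$. By the inductive hypothesis both $X_j$ and $X_k$ admit representations of the form \eqref{tri} (or \eqref{quadrato} when their degree equals one). The plan is to expand
\[
[X_j,X_k]=\sum_{\ell=1}^n \bigl(X_j(X_k^{(\ell)})-X_k(X_j^{(\ell)})\bigr)\,\frac{\partial}{\partial x_\ell},
\]
where $X_j^{(\ell)}$ and $X_k^{(\ell)}$ denote the coefficients of $\partial_{x_\ell}$, and to split each summand according to whether its factors come from the $P$-part or the $Q$-part of $X_j$ and $X_k$.

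The principal claim is that, after collecting terms by $\partial_{x_\ell}$-component and separating by minimal order, the contributions of minimal order $\geq k$ assemble exactly into the monomials $P_{i\ell}(x)$ for those $\ell$ with $i\prec\ell$, while the contributions of minimal order $<k$ collectively define $Q_{i\ell}(x)$. The first part I would verify by matching the explicit formula \eqref{star} against the output of the product rule, exploiting the identity $I(\ell)=I(j)+\ex_k+(\text{contributions from further appended indices})$ valid whenever $i\prec\ell$. The second part follows from the inductive minimal-order bounds on $Q_{j\ell}$ and $Q_{k\ell}$ together with the Hall condition $k'\leq k$, which is exactly what pins the resulting minimal order below $k$.

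The main obstacle will be the precise bookkeeping of minimal orders throughout the inductive step and the verification that the dominant monomial produced by Leibniz's rule reproduces \eqref{star} with $I(\ell)-I(i)$ in the exponent. Both points rely on the combinatorics of Hall's ordering, and in particular on condition \eqref{2.4}, which is what ensures $m(P_{i\ell})<i$ and rules out monomials that would violate the claimed structure. Once these combinatorial identities are in place the lemma follows without further difficulty.
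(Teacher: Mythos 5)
The paper does not prove this lemma at all: it is quoted verbatim from \cite[Lemma 2.3]{GrGr}, so the only meaningful comparison is with that source, whose argument is an induction on the degree of exactly the kind you propose. Your skeleton is the right one, and the Hall conditions enter where you say they do: for $\ell$ with $i\prec\ell$ one has $I(i)=I(j)+\ex_k$ and $I(\ell)-I(j)=(I(\ell)-I(i))+\ex_k$, and a one-line computation with \eqref{star} gives $-\tfrac{\partial}{\partial x_k}P_{j\ell}=P_{i\ell}$; the bounds $k\leq m(P_{i\ell})<i$ are then automatic from \eqref{2.3}--\eqref{2.4}, as you observe.

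However, the proposal defers precisely the two verifications that constitute the proof, and the justification you give for one of them is insufficient. First, you claim the contributions of minimal order $\geq k$ ``assemble exactly'' into the $P_{i\ell}$, but you never argue that no other such contribution exists. In fact the only candidate is the term produced by the $\partial/\partial x_k$--component of $X_k$ (whose coefficient is $1+Q_{kk}$) acting on $P_{j\ell}$, and one must still check that for $\ell$ with $j\prec\ell$ but $i\not\prec\ell$ this term is zero or already of minimal order $<k$; this uses the monotonicity \eqref{2.3} of the appended string (if $x_k$ divides $P_{j\ell}$ while $\ell$ is not a descendant of $i$, then the first appended index is $<k$ and survives the differentiation). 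Second, the estimate $m(\cdot)<k$ for all remaining terms does \emph{not} follow only from the inductive bounds on $Q_{j\ell}$, $Q_{k\ell}$ and the Hall inequality $k'\leq k$: you also need the inductive upper bound $m(P_{k\ell})<k$ on the $P$--part of the second factor $X_k$ (which may itself have degree $\geq 2$), and, crucially, the ``no constant terms'' property of the $Q$'s must be propagated through the induction. Your plan never mentions it, yet it is load-bearing: without it, products such as $Q_{km}\,\tfrac{\partial}{\partial x_m}P_{j\ell}$, or $f_m\,\tfrac{\partial}{\partial x_m}P_{k\ell}$ and $g_m\,\tfrac{\partial}{\partial x_m}Q_{j\ell}$ in the cases where the differentiation removes the unique variable of index $<k$, could contain monomials of minimal order $\geq k$, and the claimed decomposition would collapse. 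These gaps are fixable --- the induction does close along the route you sketch --- but the bookkeeping you postpone is the proof, not an afterthought.
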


The exponential mapping of the second type  related to an ordered system of
vector
fields $X_1,\ldots,X_n$ is the mapping $\Psi:\Rn\to\Rn$, when globally defined,
\begin{equation}
 \label{PSI}
   \Psi(x) = \ex^{x_1 X_1}\circ\ldots\circ \ex^{x_n X_n} (0),\quad x\in\Rn,
\end{equation}
where $\ex^{t X}$ denotes the flow of the vector field $X$ with parameter
$t\in\R$.
When $G$ is a Lie group with group law $\cdot$ and $X_1,\ldots,X_n\in
\mathfrak g=\mathrm{Lie}(G)$, we can equivalently define 
the mapping $\Psi: \Rn\to G$ as 
\begin{equation}
 \label{PSI2}
 \Psi(x) = \exp(x_n X_n)\cdot \ldots\cdot \exp(x_1 X_1) ,\quad x\in \R^n,
\end{equation}
where $\exp:\mathfrak g \to G$ is the exponential mapping.
 
We prove that a Hall-Grayson-Grossman basis of vector fields in $\Rn$ induces
exponential coordinates of the second type.

\begin{prop}\label{EXPO}
 Let $X_1,\ldots,X_n$ be a Hall-Grayson-Grossman basis of vector fields in
$\Rn$. Then we have $x=\Psi(x)$ for all $x\in\Rn$.
\end{prop}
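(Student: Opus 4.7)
The plan is to prove, by descending induction on $k\in\{1,\ldots,n\}$, that
\[
 \ex^{x_k X_k}\circ\ex^{x_{k+1} X_{k+1}}\circ\cdots\circ\ex^{x_n X_n}(0)
 = (0,\ldots,0,x_k,x_{k+1},\ldots,x_n),
\]
where the first $k-1$ entries are zero. Specializing to $k=1$ gives the desired equality $\Psi(x)=x$, while the base case $k=n$ reduces to showing $\ex^{x_n X_n}(0)=x_n\ex_n$. The entire induction is driven by one structural identity about the Hall--Grayson--Grossman vector fields that I describe next.

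Let $V_i:=\{y\in\Rn:y_1=\cdots=y_{i-1}=0\}$ be the descending affine flag in the standard coordinates of $\Rn$. The core claim is that
\[
 X_i(y) = \frac{\partial}{\partial x_i}\quad \text{for every } y\in V_i \text{ and every } i=1,\ldots,n.
\]
Granted this, $X_i|_{V_i}$ equals the constant vector field $\partial/\partial x_i$, which is tangent to the affine subspace $V_i$; hence $V_i$ is invariant under the flow $\ex^{sX_i}$ and the integral curve through any $y\in V_i$ is simply $s\mapsto y+s\ex_i$. The inductive step then closes at once: if $y=\ex^{x_{k+1}X_{k+1}}\circ\cdots\circ\ex^{x_n X_n}(0)$ has coordinates $(0,\ldots,0,x_{k+1},\ldots,x_n)$, then $y\in V_{k+1}\subset V_k$ and $\ex^{x_k X_k}(y)=y+x_k\ex_k=(0,\ldots,0,x_k,x_{k+1},\ldots,x_n)$.

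To verify the core claim I would split according to whether $i\leq r$ or $i>r$. When $i\leq r$, formula \eqref{quadrato} gives $X_i(y)=\sum_{\ell:\,i\prec\ell}\frac{(-1)^{|I(\ell)|}}{I(\ell)!}y^{I(\ell)}\partial/\partial x_\ell$: the term $\ell=i$ contributes $\partial/\partial x_i$, while for any other $\ell$ with $i\prec\ell$ one has $\ell_0=i$ and, by the Hall constraint $\ell_0>\ell_1$, also $\ell_1\leq i-1$, so the monomial $y^{I(\ell)}$ carries the factor $y_{\ell_1}$, which vanishes on $V_i$. When $i>r$ and $X_i=[X_j,X_k]$, formula \eqref{tri} writes $X_i$ as $\sum_{\ell:\,i\prec\ell}P_{i\ell}(x)\partial/\partial x_\ell+\sum_\ell Q_{i\ell}(x)\partial/\partial x_\ell$ with $P_{ii}\equiv 1$; the bounds $m(P_{i\ell})<i$ for $\ell\neq i$ and $m(Q_{i\ell})<k<i$ force every monomial appearing in these polynomials to contain some variable $x_m$ with $m<i$, hence to vanish on $V_i$.

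The only real obstacle is conceptual: noticing that the minimal-order bounds of Grayson and Grossman align precisely with the flag $V_n\subset V_{n-1}\subset\cdots\subset V_1=\Rn$, so that each $X_i$ collapses to $\partial/\partial x_i$ on $V_i$. Once this is spotted, the flows $\ex^{sX_i}|_{V_i}$ reduce to trivial translations and the induction runs by inspection; no Baker--Campbell--Hausdorff manipulation or explicit computation with structure constants is required.
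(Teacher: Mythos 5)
Your proof is correct and takes essentially the same route as the paper: the identical descending induction on the composed flows, with the Grayson--Grossman minimal-order bounds in \eqref{quadrato} and \eqref{tri} forcing each $X_i$ to reduce to $\partial/\partial x_i$ at the relevant points, so that each flow is a coordinate translation. Your explicit remark that $X_i$ is tangent to the flag subspace $V_i=\{y:y_1=\cdots=y_{i-1}=0\}$, hence the flow stays in $V_i$, merely makes precise the invariance that the paper's statement ``$Q_{i\ell}$ vanishes along the flow'' uses implicitly.
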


\begin{proof}
 Let $\ex_1,\ldots,\ex_n$ be the standard basis of $\Rn$. By the structure
\eqref{tri} of $X_n$, we deduce that $\ex^{x_n X_n}(0) = x_n\ex_n$. Assume by
induction that for some $1\leq i<n$ we have
\begin{equation}\label{INDUX}
  \Psi(x) = \ex^{x_1 X_1}\circ\ldots\circ \ex^{x_i X_i} \Big( \sum_{h=i+1}^n
x_h \ex_{h}\Big). 
\end{equation}
Assume first that $i>r$. Then we have $X_i = [X_j,X_k]$ for some $k<j<i$, and
$X_i$ is of the form \eqref{tri}. Since $m(Q_{i \ell})<k<i$ for all
$\ell=1,\ldots,n$, the polynomial $Q_{i \ell}$ vanishes along the flow of $X_i$
starting from $\sum_{h=i+1}^n x_h \ex_h$. Because $m(P_{i \ell})<i$ for
$\ell\neq i$ with $i\prec \ell$, the monomial $P_{i \ell}$ also vanishes along
the same flow. When $\ell=i$, we have $P_{i \ell}=1$. It follows that
\begin{equation}
 \label{INTER}
 \ex^{x_i X_i} \Big( \sum_{h=i+1}^n
x_h \ex_{h}\Big)=\sum_{h=i}^n
x_h \ex_{h}.
\end{equation}
This proves the inductive step when $i>r$.

Assume now $i\in\{ 1,\ldots,r\}$. We have $X_1=\partial/\partial x_1$ and for
$i=2,\ldots,r$ the condition $i\prec\ell$, $\ell\neq i$, implies that
$m(P_{i \ell})<i$. The same argument as above proves \eqref{INTER}
also when $i=1,\ldots,r$.
\end{proof}

\section{Integration of the adjoint equations in free nilpotent groups}
\label{agio}

Let $G$ be a free nilpotent Lie group of dimension $n$ and rank $r$. The Lie
algebra $\mathfrak g$ of $G$
is isomorphic to a Lie algebra of  vector fields in $\Rn$  that are left
invariant with respect to some product structure.
Let $X_1,...,X_n$ be a Hall-Grayson-Grossman basis of this Lie algebra.
Recall that the generators $X_1,\ldots,X_r$ have the form \eqref{quadrato}.
In this section, we integrate the system of differential equations \eqref{8} in
$\Rn$ with the structure constants $c_{ij}^k \in\R$ determined by the basis
$X_1,...,X_n$:
\begin{equation}\label{StrCon}
 [X_i,X_j] = \sum_{k=1}^n c_{ij}^k X_k,\quad i,j=1,\ldots,n.
\end{equation}
Let us also introduce the \emph{generalized structure constants}
 $c_{i\al}^k\in\R$ for any multi-index $\a\in\mathcal I = \N^n$ and 
$i,k\in\{1,\dots,n\}$. These constants are defined via the relation
\begin{equation}\label{GSC}
  [X_i,X_\a]= \sum_{k=1}^n c^k_{i\a} X_k.
\end{equation}
Recall the definition \eqref{commutatoreiterato} for the  iterated commutator 
$[X_i,X_\a]$. 

For any $i=1,\ldots,n$ and $\al\in\I$, define the linear mapping
$\phi_{i\al}:\Rn\to\R$
\begin{equation}\label{LinM}
 \phi_{i\al}(v) = \frac{(-1)^{|\a|}}{\a!}  \sum_{k=1}^n
 c^k_{i\a}v_k,\quad v=(v_1,\ldots,v_n)\in\Rn.
\end{equation}
Notice that $\phi_{i0}(v)=v_i$. The following polynomials are central objects in
the integration formulae for
the adjoint equations.

\begin{defi}
For each $i\in\{1,\dots,n\}$ and $v\in\R^n$, we call the polynomial
$P_i^v:\Rn\to\R$
\begin{equation}\label{gestalt}
P_i^v(x)= \sum_{\a\in \I} \phi_{i\alpha}(v) 
 x^\a,\quad x\in\Rn,
\end{equation}
\noindent
\emph{extremal polynomial}
of the free nilpotent group $G$ with respect to the basis $X_1,\ldots,X_n$ of
$\mathfrak g =\mathrm{Lie}(G)$.
\end{defi}

\begin{remark} Notice that the generalized structure constants $c_{i\alpha}^k$
in
\eqref{GSC} satisfy 
 $c_{i\alpha}^k =0 $
if $d(i)+ |\alpha|>s$, where
$s$ is the step of the Lie algebra. Then the  polynomial $P^v_i(x) $ has
homogeneous degree
at most $s-d(i)$. Recall that, by definition, the homogeneous degree of a
monomial $x^\alpha,\alpha\in\I$, is $d(\alpha):=\sum_{j=1}^n \alpha_j\,d(j)$ and
the homogeneous degree of a polynomial $\sum_{i=1}^N c_ix^{\alpha_i}$, with
$c_i\neq0$ and $\alpha_i\in\I$, is $\max_{i=1,\dots,N} d(\alpha_i)$.
\end{remark}
  
\begin{prop}\label{lemma-0}
Extremal polynomials have the following properties. 

{\upshape (i)} If $\ell,i=1,\ldots,n $  are  such
that $i\prec \ell$ and $v_\ell\neq 0$, then $P_i^v\neq 0$. 

{\upshape (ii)} If $v\in\R^n$ is such that $P_i^{v}=0$ for all
$i=1,\dots,r$, then $v=0$.

$P_i^{v}=0$ for all

{\upshape (iii)} For all  $i=1,\ldots,n $ and $v\in\Rn$ we have
$P_i^v(0)=v_i$. 
\end{prop}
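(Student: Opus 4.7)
The plan is to handle (iii) by direct evaluation, establish (i) by producing an explicit multi-index whose coefficient in $P_i^v$ is visibly non-zero, and then deduce (ii) from (i) by a contrapositive argument.

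For (iii), I would evaluate the series \eqref{gestalt} at $x=0$: only the $\alpha=(0,\ldots,0)$ term survives. The convention $[\,\cdot\,,X_{(0,\ldots,0)}]=\mathrm{Id}$ from \eqref{commutatoreiterato} gives $[X_i,X_0]=X_i$, hence $c_{i0}^k=\delta_{ik}$, and therefore $\phi_{i0}(v)=v_i$, exactly as recorded just after \eqref{LinM}. Thus $P_i^v(0)=v_i$.

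The substantive work is (i). Given $i\prec\ell$ with $v_\ell\neq 0$, I would exhibit a single multi-index $\alpha\in\I$ for which $[X_i,X_\alpha]=X_\ell$ in the Hall basis, so that $c^j_{i\alpha}=\delta_{j\ell}$ and consequently
\[
\phi_{i\alpha}(v)=\frac{(-1)^{|\alpha|}}{\alpha!}\,v_\ell\neq 0,
\]
proving that the coefficient of $x^\alpha$ in $P_i^v$ is non-zero. The natural candidate is $\alpha:=I(\ell)-I(i)$; this lies in $\I$ since $i\prec\ell$ entails $I(i)\leq I(\ell)$. To justify $[X_i,X_\alpha]=X_\ell$, I would unpack Definition \ref{lop}: there is $k\in\{0,\ldots,h\}$ with $X_i=[X_{\ell_0},X_{\ell_1},\ldots,X_{\ell_k}]$, and hence $X_\ell=[X_i,X_{\ell_{k+1}},\ldots,X_{\ell_h}]$. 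The non-decreasing property \eqref{2.3} of the tail $\ell_{k+1}\leq\cdots\leq\ell_h$ is precisely what ensures that this ordered commutator coincides with the grouped-by-index commutator $[X_i,X_\alpha]$ defined in \eqref{commutatoreiterato}, for the multi-index $\alpha=I(\ell)-I(i)$. This matching between ordered sequences of indices and multi-index notation is the only delicate bookkeeping step of the whole argument; everything else is direct from \eqref{LinM}.

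For (ii), I would argue by contraposition. Assuming $v\neq 0$, pick any $\ell$ with $v_\ell\neq 0$ and set $i:=\ell_0$. Whether $\ell\in\{1,\ldots,r\}$ (in which case $\ell_0=\ell$) or $\ell\notin\{1,\ldots,r\}$ (in which case $\ell_0\in\{2,\ldots,r\}$ by \eqref{2.1}--\eqref{2.2}), we have $i\in\{1,\ldots,r\}$ and tautologically $i\prec\ell$. Part (i) then yields $P_i^v\neq 0$, contradicting the hypothesis that $P_i^v=0$ for all $i=1,\ldots,r$. Hence $v=0$.
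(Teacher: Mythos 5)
Your proof is correct and follows essentially the same route as the paper: (iii) via $c^k_{i0}=\delta_{ik}$, (i) by exhibiting a single multi-index $\alpha$ with $[X_i,X_\alpha]=X_\ell$, hence $c^j_{i\alpha}=\delta_{j\ell}$ and a visibly nonzero coefficient $\phi_{i\alpha}(v)=\tfrac{(-1)^{|\alpha|}}{\alpha!}v_\ell$, and (ii) as an immediate consequence of (i). If anything, your choice $\alpha=I(\ell)-I(i)$, justified through the nondecreasing tail property \eqref{2.3}, is the precise one for a general $i\prec\ell$ (the paper's proof writes $\alpha=I(\ell)$, which coincides with yours only when $I(i)=0$, i.e.\ $i\le r$), so your bookkeeping step is exactly the right refinement.
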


\begin{proof} 
Let us prove (i). As  $i\prec\ell$,  we have $X_\ell=[X_i,X_{\al}]$ for 
$\al=I(\ell)\in\I$. It follows that 
$c_{i \al}^k=\delta_{\ell k}$, and thus, as $v_\ell\neq0$,
\[
P_i^v(x)=\frac{(-1)^{|\alpha|} }  {\alpha!}  v_\ell
x^{\al} + \sum_{ \beta \in\I, \, \beta \neq\al} \phi_{i\beta}(v)  
x^\beta\neq 0.
\]
Statement (ii) is an easy consequence of (i), while the proof of (iii) is
elementary and relies upon the identity
$c^k_{i0}=\delta_{ik}$.
\end{proof}

\begin{remark}\label{lemma-01}
We point out the following implication: if $r_2:=\dim \mathfrak g_2$ and
$v\in\R^n$ is such that $P_i^{v}=0$ for all
$i=r+1,\dots,r+r_2$, then $v_{r+1}=v_{r+2}=\cdots=v_n=0$. 

Indeed, arguing by contradiction we assume that there exists $j$ with $d(j)\geq
2$ such that $v_j\neq 0$. By Proposition \ref{lemma-0} (iii) we have
\[
v_{r+1}=v_{r+2}=\cdots=v_{r+r_2}=0,
\]
hence $d(j)\geq 3$ and we can write
\[
X_j=[\cdots[X_{j_0},X_{j_1}],\dots],X_{j_h}]
\]
for a suitable $h\geq 2$. The element $[X_{j_0},X_{j_1}]$ belongs to the Hall
basis and, in particular, $[X_{j_0},X_{j_1}]=X_k$ for a suitable $k$ of degree
2. Since $k\prec j$, Proposition \ref{lemma-0} (i) gives $P_k^v\neq 0$, a
contradiction.
\end{remark}

The following lemma shows that the linear subspace generated by extremal
polynomials is closed under derivatives along left invariant vector fields, in a
way related to the structure of $\mathfrak g$.

\begin{prop}\label{propderivpolinomi}
For any $v\in\R^n$ and $i,j\in\{1,\dots,n\}$ there holds 
\begin{equation}\label{derivatepolinomi}
X_i P_j^v = \sum_{k=1}^n c_{ij}^k P_k^v.
\end{equation}
In particular, $X_iP_i^v=0$ for any $i=1,\dots,n$ and $v\in\R^n$.
\end{prop}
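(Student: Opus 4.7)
\emph{Strategy.} I would prove the proposition by recognizing the extremal polynomial as a matrix coefficient of the adjoint representation. Let $\nu\in\mathfrak g^*$ denote the linear functional with $\nu(X_k)=v_k$ for $k=1,\dots,n$, and identify $x\in\Rn$ with $\Psi(x)=\exp(x_nX_n)\cdots\exp(x_1X_1)\in G$ through Proposition \ref{EXPO}. The central claim is the adjoint-action formula
\[
P_j^v(x)\;=\;\nu\bigl(\Ad(\Psi(x))\,X_j\bigr).
\]
Given this identity, \eqref{derivatepolinomi} follows from the Leibniz rule for $\Ad$ in essentially one line.

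\emph{Deriving the adjoint formula.} Using $\Ad(\exp(x_iX_i))=\exp(x_i\ad X_i)$ and expanding each exponential as a power series in $x_1,\dots,x_n$, one obtains
\[
\Ad(\Psi(x))\,X_j \;=\; \sum_{\alpha\in\I}\frac{x^\alpha}{\alpha!}\,(\ad X_n)^{\alpha_n}\cdots(\ad X_1)^{\alpha_1}\,X_j.
\]
Since $\ad X_i(Y)=-[Y,X_i]$, the left-nested convention \eqref{notazione 1} together with \eqref{commutatoreiterato} yields the sign identity
\[
(\ad X_n)^{\alpha_n}\cdots(\ad X_1)^{\alpha_1}\,X_j \;=\; (-1)^{|\alpha|}\,[X_j,X_\alpha].
\]
Applying $\nu$ and using $[X_j,X_\alpha]=\sum_k c_{j\alpha}^k X_k$ from \eqref{GSC} together with the definition \eqref{LinM} of $\phi_{j\alpha}$ reproduces \eqref{gestalt}.

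\emph{Differentiation and the main obstacle.} Since $X_i$ is left-invariant on $G$, for any smooth $f:G\to\R$ one has $(X_if)(g)=\frac{d}{dt}\big|_{t=0}f(g\exp(tX_i))$. Applying this to $f(g)=\nu(\Ad(g)X_j)$ and using $\frac{d}{dt}\big|_{t=0}\Ad(\exp(tX_i))X_j=[X_i,X_j]$ together with bilinearity of $\nu\circ\Ad(\Psi(x))$ gives
\[
X_iP_j^v(x) \;=\; \nu\bigl(\Ad(\Psi(x))\,[X_i,X_j]\bigr) \;=\; \sum_{k=1}^n c_{ij}^k\,\nu\bigl(\Ad(\Psi(x))\,X_k\bigr) \;=\; \sum_{k=1}^n c_{ij}^k\,P_k^v(x).
\]
The particular case $X_iP_i^v=0$ is then immediate from $[X_i,X_i]=0$. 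The main obstacle is the sign identity in the second paragraph: it requires carefully translating between the paper's left-nested iterated-bracket convention and the power-series expansion of $\Ad$. Once that bookkeeping is done, no further combinatorics (in particular no direct appeal to Lemma \ref{Roberto}) is needed, and the role of the Hall--Grayson--Grossman coordinates is only that, by Proposition \ref{EXPO}, the identification $x=\Psi(x)$ makes the differential operators \eqref{quadrato}--\eqref{tri} coincide with the abstract left-invariant vector fields acting on $G$.
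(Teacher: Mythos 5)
Your proof is correct, but it takes a genuinely different route from the paper's. You identify the extremal polynomial as a matrix coefficient of the adjoint representation, $P_j^v(x)=\nu\bigl(\Ad(\Psi(x))X_j\bigr)$ with $\Psi$ as in \eqref{PSI2}, and your sign bookkeeping $(\ad X_n)^{\alpha_n}\cdots(\ad X_1)^{\alpha_1}X_j=(-1)^{|\alpha|}[X_j,X_\alpha]$ is right: in $\Ad(\Psi(x))=e^{x_n\ad X_n}\cdots e^{x_1\ad X_1}$ the $X_1$'s act innermost and the $X_n$'s outermost, exactly matching the nesting convention \eqref{commutatoreiterato}, so the expansion reproduces \eqref{gestalt}; then \eqref{derivatepolinomi} follows from left-invariance and the homomorphism property of $\Ad$, uniformly in $i$ and with no induction. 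The paper argues differently: for $d(i)=1$ it differentiates \eqref{gestalt} using the explicit coordinate formula \eqref{quadrato} and the polynomial identity \eqref{eqcruciale2}, which is only established inside the proof of Theorem \ref{teointduale} (see \eqref{eqcruciale}) and ultimately rests on the combinatorial Lemma \ref{Roberto}; the case $d(i)\geq 2$ is then done by induction on the degree via the Jacobi identity. Your approach buys brevity and a conceptual explanation of where the polynomials come from (they are the coordinates of the coadjoint transport of $\nu$, consistent with \eqref{popix}), and it bypasses Lemma \ref{Roberto} and the forward reference entirely; the same identity would in fact also streamline Theorem \ref{teointduale}. What it costs is reliance on group-level facts that the paper asserts but its own proof never needs: that the Hall--Grayson--Grossman fields are left-invariant for the product structure on $\Rn$ and that, by Proposition \ref{EXPO} together with \eqref{PSI2}, the point $x$ is the group element $\exp(x_nX_n)\cdots\exp(x_1X_1)$ with identity at the origin; the paper's argument, by contrast, stays entirely at the level of the vector fields and structure constants on $\Rn$. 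Both routes are legitimate within the paper's framework.
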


\begin{proof}
Let $v\in\R^n$ and $j\in\{1,\dots,n\}$ be fixed. We argue  by
induction on the degree of $i$. If $d(i)=1$, i.e., $i=1,\ldots,r$,  we
have by \eqref{quadrato}
\[
\begin{split}
X_i P_j^v = & \left(\sum_{i\prec \ell} \frac{(-1)^{|I(\ell)|}}{I(\ell)!}
x^{I(\ell)} \frac{\partial}{\pa x_\ell}\right)
\sum_{\al\in\I}\phi_{j\alpha}(v) x^\al\\
= & \sum_{k=1}^nv_k \sum_{\al\in\I} \sum_{i\prec \ell} c_{j\al}^k  
\al_\ell \frac{(-1)^{|\alpha|+ |I(\ell)|}}{\alpha!I(\ell)!}
x^{\al-\erm_\ell+I(\ell)}.
\end{split}
\]
We now use the following polynomial identity
\begin{equation}\label{eqcruciale2}
\sum_{\a\in \I} 
\sum_{i\prec \ell}
c^k_{j\a}   
\a_\ell 
\frac{(-1)^{|\alpha|+ |I(\ell)|}}{\alpha! I(\ell)!}
x^{\a-\ex_\ell+I(\ell)} 
=
-  \sum_{h=1}^n \sum _{\beta\in\I} c_{ji}^h  c^k_{h\beta}
\frac{(-1)^{|\beta|}}{\beta!} x^\beta.
\end{equation}
This identity will be proven later, in a crucial
step of the proof of Theorem
\ref{teointduale}, see equation \eqref{eqcruciale}.
From  \eqref{eqcruciale2} we obtain
\[
\begin{split}
X_i P_j^v = & - \sum_{k=1}^n v_k \sum_{h=1}^n \sum_{\be\in\I} c_{ji}^h
c_{h\be}^k \frac{(-1)^{|\beta|}}{\beta!} x^\be 
=  \sum_{h=1}^n c_{ ij}^h P_h^v.
\end{split}
\]
This ends the proof of the induction base.

If $d(i)\geq 2$, we have $X_i=[X_p,X_u]$ for some $p,u$ with $d(p),d(u)<d(q)$.
By inductive assumption, we have
\[
\begin{split}
X_iP_j^v = & X_p X_u P_j^v - X_u X_p P_j^v\\
= & X_p \Big(\sum_{k=1}^nc_{uj}^k P_k^v\Big) - X_u\Big(\sum_{k=1}^nc_{pj}^k
P_k^v\Big)\\
= & \sum_{h,k=1}^n (c_{uj}^kc_{pk}^hP_h^v - c_{pj}^kc_{uk}^hP_h^v)\\
= & -\sum_{h,k=1}^n (c_{ju}^kc_{pk}^h  + c_{pj}^kc_{uk}^h)P_h^v.
\end{split}
\]
The Jacobi identity 
$[[X_j,X_u],X_p]+[[X_u,X_p],X_j]+[[X_p,X_j],X_u]=0$ yields
\[
\sum_{k=1}^n c_{ju}^k c_{kp}^h+c_{up}^k c_{kj}^h+ c_{pj}^k c_{ku}^h=0.
\]
Using this identity and
$c_{up}^kc_{jk}^h=c_{pu}^k c_{kj}^h=\delta_{ik}c_{kj}^h$, we obtain

\[
X_iP_j^v =\sum_{h,k=1}^n  c_{up}^k c_{jk}^h P^v_h=\sum_{h=1}^n 
c_{ij}^h P^v_h.
\]
 This completes the proof.
\end{proof}

We identify a free nilpotent Lie group $G$ with $\Rn$ via exponential
coordinates of the second type related to a Hall-Grayson-Grossman basis
$X_1,\ldots,X_n$, as explained in Proposition \ref{EXPO}.

\begin{teo}\label{teointduale}
Let $G=\Rn$ be a free nilpotent Lie group, let $\ga:[0,1]\to G$
be a horizontal curve such that
$\ga(0)=0$, and let $\la:[0,1]\to\R^n$ be a
Lipschitz curve. The following statements are
equivalent:

\begin{itemize}
 \item[A)] The curve $\la$ solves the system of equations \eqref{8}.

 \item[B)] There exists $v\in\Rn$  such that, for all $i=1,\ldots,n$,
we have
\begin{equation}\label{eqlaP}
\la_i(t) = P_i^{v}(\ga(t)),\quad t\in[0,1],
\end{equation}
and in fact $v=\la(0)$.
\end{itemize}
\end{teo}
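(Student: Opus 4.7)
The plan is to show that for any $v\in\R^n$ the curve $t\mapsto(P_1^v(\ga(t)),\dots,P_n^v(\ga(t)))$ solves the linear system \eqref{8}, identify its initial value via Proposition \ref{lemma-0}\,(iii), and then conclude by uniqueness for the Cauchy problem.

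Granting the derivation rule
\[
X_jP_i^v \;=\; \sum_{k=1}^n c_{ji}^k\,P_k^v,\qquad j=1,\dots,r,
\]
from Proposition \ref{propderivpolinomi}, the implication B)$\,\Rightarrow\,$A) is immediate: since $\ga$ is horizontal with controls $h_1,\dots,h_r$ and the generators $X_1,\dots,X_r$ satisfy \eqref{coordorizzX_j} (so that $\dot\ga_j=h_j$), differentiating \eqref{eqlaP} and using the antisymmetry $c_{ji}^k=-c_{ij}^k$ gives
\[
\dot\la_i \;=\; \sum_{j=1}^r h_j(X_jP_i^v)(\ga) \;=\; -\sum_{j=1}^r\sum_{k=1}^n c_{ij}^k\,\dot\ga_j\,P_k^v(\ga),
\]
which is exactly \eqref{8}. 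Evaluating at $t=0$ and invoking Proposition \ref{lemma-0}\,(iii) gives $\la_i(0)=P_i^v(0)=v_i$, so $v=\la(0)$. For the converse A)$\,\Rightarrow\,$B), set $v:=\la(0)$ and define $\widetilde\la_i(t):=P_i^v(\ga(t))$; by the computation just carried out, $\widetilde\la$ solves \eqref{8} with $\widetilde\la(0)=\la(0)$. Since \eqref{8} is a linear first order ODE in $\la$ with $L^\infty$ coefficients $\sum_j c_{ij}^k\dot\ga_j$, standard uniqueness forces $\la\equiv\widetilde\la$.

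The genuine work therefore lies in proving the derivation rule, and within Proposition \ref{propderivpolinomi} this reduces to the power series identity \eqref{eqcruciale2} (the base of the induction on $d(i)$; the step $d(i)\geq 2$ is then handled by writing $X_i=[X_p,X_u]$ and applying the Jacobi identity, as sketched in the proof of that proposition). To establish \eqref{eqcruciale2} I would apply Lemma \ref{Roberto} to the iterated commutator $[[X_i,X_j],X_\beta]$ and expand both sides in the Hall basis, which yields
\[
-\sum_{m=1}^n c_{ji}^m\,c_{m\beta}^h \;=\; \sum_{\ell\in\A_{\beta,j}} c_{\ell\beta}\,c_{i,\beta-I(\ell)+\ex_\ell}^h
\]
for every $h\in\{1,\dots,n\}$. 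Expanding $X_jP_i^v$ via \eqref{quadrato} and \eqref{gestalt}, reindexing through $\alpha=\beta-I(\ell)+\ex_\ell$, and using the elementary relation $\alpha_\ell/\alpha!=1/(\alpha-\ex_\ell)!$, one checks that matching the coefficient of $v_h\,x^\beta$ in both members of $X_jP_i^v=\sum_k c_{ji}^kP_k^v$ is precisely the displayed identity.

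The main obstacle is thus the combinatorial identity \eqref{eqcruciale2}, equivalently Lemma \ref{Roberto}, which is where the Jacobi identity interacts with the Hall ordering and the factorial bookkeeping in the coefficients $\phi_{i\alpha}$; once the derivation rule $X_jP_i^v=\sum_k c_{ji}^kP_k^v$ is in hand, both directions of the theorem follow by a routine linear ODE argument.
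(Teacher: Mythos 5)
Your proposal is correct and follows essentially the same route as the paper: the key content is the combinatorial identity \eqref{eqcruciale2} (equivalently Lemma \ref{Roberto}), and once the derivation rule $X_jP_i^v=\sum_k c_{ji}^kP_k^v$ is available, B)$\Rightarrow$A) is the chain-rule computation and A)$\Rightarrow$B) follows from uniqueness for the linear ODE \eqref{8}, exactly as in the paper, which carries out the same coefficient matching inline via \eqref{eqcruciale}--\eqref{19marzo} before invoking Lemma \ref{Roberto}. Your routing through Proposition \ref{propderivpolinomi} is only an organizational difference, and you correctly avoid circularity by deriving \eqref{eqcruciale2} directly from Lemma \ref{Roberto} by expanding both sides in the Hall basis.
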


\begin{proof} We show that the curve $\la$ defined by 
\eqref{eqlaP}  solves the system \eqref{8} with initial condition 
$\la(0)=v$. By the uniqueness of the solution, this will
prove the equivalence
of the statements A) and B). Notice that the curve $\la$ defined via
\eqref{eqlaP} satisfies $\la(0)=v$ by Proposition \ref{lemma-0} part (iii).

Recall that $\gamma$ is Lipschitz-continuous and is therefore differentiable
almost everywhere. 
We preliminarily   compute the derivative of $t\mapsto \gamma(t) ^\al$ for
any multi-index $\al\in\I$, at any differentiability point. We have
\begin{equation}\label{DERO1}
\frac{d}{dt} \ga^\a = \sum_{\ell=1}^n \a_\ell \ga_\ell^{\a_\ell-1} \dot \ga_\ell
\prod_{p\neq \ell} \ga_p^{\a_p} = \sum_{\ell=1}^n \a_\ell 
\ga^{\a-\ex_\ell}\dot \ga_\ell.
\end{equation}

Since $\ga$ is horizontal we have  
\begin{equation}\label{popis}
\dot\ga=\sum_{q=1}^r \dot\ga_q X_q(\ga)\qquad\text{a.e. on }[0,1],
\end{equation}
where $X_1,\dots,X_n$ is the fixed Hall basis and $r\geq 2$
is the rank of the group.

For any $\ell=1,\dots,n$ we can compute the derivative $\dot\ga_\ell$
starting from \eqref{popis} and from the formula \eqref{quadrato} for
the generators $X_1,\ldots,X_r$ of the Lie algebra. We have to consider the 
index $\ell_0=1,\ldots,r$ and the multi-index $I(\ell)\in \I$ such that
$X_\ell=[X_{\ell_0}, X_{I(\ell)}]$, see \eqref{IELLE}, and then look at the
$\ell$-th coordinate of the vector field $X_{\ell_0}$. We   then find 
\begin{equation}\label{focit}
\dot\ga_\ell =
\dot\ga_{\ell_0}\,\frac{(-1)^{|I(\ell)|}
} {I(\ell)!} \ga^{I(\ell)} \qquad\text{a.e. on }[0,1].
\end{equation}
By \eqref{eqlaP},  \eqref{DERO1},  \eqref{focit}
 and the definition of $P^v_i$, we obtain
\[
\begin{split}
\dot \la_i & = \sum_{\a\in \I}  \sum_{j=1}^n\sum_{\ell=1}^n  c^j_{i\a} v_j 
\a_\ell   \dot \ga_{\ell_0}\,  
 \frac{(-1)^{|\al|+|I(\ell)|}}{\al! I(\ell)!}\,\ga^{I(\ell)+\a-\ex_\ell},
\end{split}
\]
which is equivalent to  
\begin{equation}\label{8mubis}
\dot \la_i  = \sum_{q=1}^r \dot \ga_q \sum_{j=1}^n v_j \sum_{\a\in \I} 
\sum_{\ell:\ell_0=q} c^j_{i\a}    \a_\ell
\frac{(-1)^{|\al|+|I(\ell)|}}{\al! I(\ell)!}  \ga^{\a-\ex_\ell+I(\ell)}.
\end{equation}
On the other hand, the right hand side of \eqref{8} is 
\begin{equation}\label{8muter}
- \sum_{q=1}^r \dot \ga_q \sum_{k=1}^n  c_{iq}^k \la_k
 = - \sum_{q=1}^r \dot \ga_q \sum_{j=1} ^n v_j\sum_{k=1}^n \sum _{\beta
\in\I} c_{iq}^k  c^j_{k\beta} \frac{(-1)^{|\beta|}}{\beta!} \ga^\beta.
\end{equation}
Comparing the lines   \eqref{8mubis} and \eqref{8muter}, we see that it is
sufficient  to show that for any fixed $i,j=1,\dots,n$ and
$q=1,\dots,r$ the following identity of polynomials is
verified
\begin{equation}\label{eqcruciale}
\sum_{\a\in \I} 
\sum_{\ell:\ell_0=q}
c^j_{i\a}  
\a_\ell 
\frac{(-1)^{|\al|+|I(\ell)|}}{\al!I(\ell)!}
x^{\a-\ex_\ell+I(\ell)} 
=
-  \sum_{k=1}^n \sum _{\beta\in\I} c_{iq}^k 
c^j_{k\beta}\frac{(-1)^{|\beta|}}{\beta!}  x^\beta.
\end{equation}
On the other hand, this identity is equivalent to the following combinatorial
identity, for any fixed $\beta\in\I$: 
\[
\sum_{\substack{\ell:\ell_0=q\\ \al:\a-\ex_\ell+I(\ell)=\beta}}
c^j_{i\a}  
\a_\ell 
\frac{(-1)^{|\al|+|I(\ell)|}}{\al!I(\ell)!}
=
-  \sum_{k=1}^n  c_{iq}^k  c^j_{k\beta}\frac{(-1)^{|\beta|}}{\beta!} .
\]
Since $X_1,\ldots,X_n$ is a basis, this is in turn equivalent to
\[
\begin{split}
\sum_{j=1}^n \Big(\sum_{\substack{\ell:\ell_0=q\\
\al:\a-\ex_\ell+I(\ell)=\beta}}
c^j_{i\a}   
\a_\ell 
\frac{(-1)^{|\al|+|I(\ell)|}}{\al! I(\ell)!} \Big )X_j
=  & -  \sum_{k=1}^n  c_{iq}^k
\frac{(-1)^{|\beta|}}{\beta!} 
  \sum_{j=1}^n c^j_{k\beta} X_j\\
= & -  \sum_{k=1}^n \frac{(-1)^{|\beta|}}{\beta!}  c_{iq}^k   [X_k,X_\beta]\\
= & -    \frac{(-1)^{|\beta|}}{\beta!}  [[X_i,X_q] ,X_\beta].
\end{split}
\]
We may rearrange the last identity in the following way:
\begin{equation}\label{19marzo}
\sum_{\substack{\ell:\ell_0=q\\ \al:\a-\ex_\ell+I(\ell)=\beta}}
 \a_\ell \frac{(-1)^{|\al|+ |I(\ell)|}}{\a! I(\ell)!} [X_i,X_\a]
= - \frac{(-1)^{|\be|}}{\be!} [[X_i,X_q] ,X_\beta].
\end{equation}

\noindent 
Notice that the condition  $\a-\ex_\ell+I(\ell)=\beta$ implies 
$I(\ell)\leq\beta$, because $I(\ell)_\ell=0$. In particular, the left hand side
of \eqref{19marzo} is
\[
\sum_{\substack{\ell:\ell_0=q\\ \al:\a-\ex_\ell+I(\ell)=\beta}}
 \a_\ell \frac{(-1)^{|\al|+ |I(\ell)|}}{\a! I(\ell)!} [X_i,X_\a]
=
 -\sum_{\substack{\ell:\ell_0=q\\ I(\ell)\leq\beta}}
\frac{(-1)^{|\be|}}{(\be-I(\ell))! I(\ell)!} [X_i,X_{\beta-I(\ell)+\ex_\ell}].
\] 
We conclude that \eqref{19marzo} is equivalent to the identity 
\[
\sum_{\substack{\ell:\ell_0=q\\ I(\ell)\leq\beta}}
\frac{1}{(\be-I(\ell))! I(\ell)!} [X_i,X_{\beta-I(\ell)+\ex_\ell}] 
=
\frac{1}{\be!} [[X_i,X_q] ,X_\beta],
\]
which is precisely identity \eqref{RR} in Lemma \ref{Roberto}. This concludes
the proof of
the theorem.
\end{proof}

\begin{remark} \label{mamax}
Theorem \ref{teointduale} provides   an explicit integration of the second
equation in the Hamiltonian system \eqref{HJ}.
 The first Hamilton's equation in \eqref{HJ}, namely the equation $\dot\ga =
H_\xi(\gamma,\xi)$, reads
\[
 \dot\ga = - \sum_{j=1}^ r \langle \xi,X_j(\gamma)\rangle X_j(\gamma).
\]
By  \eqref{omegalambda} and \eqref{eqlaP}, we have
\[
 \langle  \xi,X_j(\gamma)\rangle = \la_j(\gamma(t) ) = P^v_j(\gamma(t))
\]
with $v=\lambda(0)$. By the formula \eqref{quadrato} for $X_j$, we 
obtain the following system of ordinary differential equations for   normal
extremals $\ga:[0,1]\to G =\Rn$
\[
 \dot\ga = - \sum_{j=1}^ r \sum_{j\prec\ell} 
\frac{(-1)^{|I(\ell)|} } {I(\ell)!} \gamma ^{I(\ell)} P^v_j(\gamma)  
\frac{\partial }{\partial x_{\ell}}.
\] 
\end{remark}

In order to characterize abnormal curves, we introduce the following algebraic
varieties.

\begin{defi} Let $G=\R^n$ be a free nilpotent Lie group of rank $r$.
 For any $v\in\Rn$, $v\neq0$, we call the set   
\[
  Z_v = \{x\in  \Rn: P_1^v(x) =\ldots=P_r^v(x) = 0\}
\]
an \emph{abnormal variety} of  $G$ \emph{of corank $1$}.

For linearly
independent vectors $v_1,\ldots, v_m\in\Rn$, $m\geq 2$, we call the set
$Z_{v_1}\cap \ldots \cap Z_{v_m}$ an  \emph{abnormal variety} of  $G$
\emph{of corank $m$}.
\end{defi}

Abnormal varieties depend on the system of coordinates of the second type
induced on $G$ by a 
Hall-Grayson-Grossman basis $X_1,\ldots ,X_n$ for the Lie algebra of $G$.

\begin{teo}\label{cor:polin} 
Let $G=\Rn$ be a free nilpotent Lie group and let $\ga:[0,1]\to G$
be a  horizontal curve with
$\ga(0)=0$. The following statements are
equivalent:

\begin{itemize}
 \item[A)] The curve $\ga$ is an abnormal extremal of corank $m\geq 1$.

 \item[B)] There exists $m$ linearly independent vectors
$v_1,\ldots,v_m\in\Rn$ such that $\gamma(t)\in  Z_{v_1}\cap\ldots\cap
Z_{v_m}$ for all $t\in[0,1]$.
\end{itemize}
\end{teo}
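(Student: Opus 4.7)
The strategy is to combine three ingredients already in hand: (a) the characterization (recalled right after Definition \ref{CORANK}) of an abnormal extremal of corank $m$ as a horizontal curve admitting $m$ linearly independent Lipschitz dual curves $\xi^1,\ldots,\xi^m$ each solving \eqref{ADJ} and satisfying $\langle \xi^j, X_q(\gamma)\rangle = 0$ for $q=1,\ldots,r$ on $[0,1]$ (this is \eqref{condnecessabnormali}, corresponding to the case $\xi_0=0$); (b) Theorem \ref{equazionilambda}, which transports \eqref{ADJ} into the ODE system \eqref{8} for the coordinates $\lambda_i$ of $\xi$ in the coframe $\theta_1,\ldots,\theta_n$ dual to a Hall--Grayson--Grossman basis via \eqref{omegalambda}; and (c) Theorem \ref{teointduale}, which parametrizes every solution of \eqref{8} by a single vector $v=\lambda(0)\in\Rn$ through $\lambda_i(t)=P_i^{v}(\gamma(t))$.

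To implement the equivalence I would exploit the key observation that $\theta_i(X_q)=\delta_{iq}$ forces $\langle \xi, X_q(\gamma)\rangle = \lambda_q$; hence the abnormality condition $\langle \xi, X_q(\gamma)\rangle \equiv 0$ for $q=1,\ldots,r$ reads $\lambda_q(t)\equiv 0$ for $q=1,\ldots,r$ on $[0,1]$. By Theorem \ref{teointduale}, this is equivalent to $P_q^{v}(\gamma(t))=0$ for $q=1,\ldots,r$ and $t\in[0,1]$, i.e.\ $\gamma(t)\in Z_v$ for every $t$. This already gives the equivalence at the level of a single dual curve: via $v=\lambda(0)$, dual curves $\xi$ for which $\gamma$ is abnormal correspond bijectively to vectors $v\in\Rn\setminus\{0\}$ such that $\gamma\subset Z_v$. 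Conversely, given any such $v$, setting $\lambda_i(t):=P_i^{v}(\gamma(t))$ and building $\xi$ via \eqref{omegalambda} yields a nontrivial dual curve proving abnormality, because Proposition \ref{lemma-0}(iii) ensures $\lambda(0)=v\neq 0$.

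To upgrade this to a corank count I would use the linearity of $v\mapsto P_i^{v}$ combined with Proposition \ref{lemma-0}(iii), which gives $\lambda(0)=(P_i^{v}(0))_i=v$. Any relation $\sum_j c_j \lambda^j\equiv 0$ evaluated at $t=0$ yields $\sum_j c_j v_j=0$; conversely $\sum_j c_j v_j=0$ implies $\sum_j c_j P_i^{v_j}=P_i^{0}\equiv 0$ and hence $\sum_j c_j \lambda^j\equiv 0$. Therefore the correspondence $\xi\leftrightarrow v$ is a linear isomorphism, so $m$ linearly independent dual curves $\xi^1,\ldots,\xi^m$ correspond to $m$ linearly independent vectors $v_1,\ldots,v_m$ with $\gamma(t)\in Z_{v_1}\cap\cdots\cap Z_{v_m}$ for all $t$; this establishes both implications A)$\Leftrightarrow$B).

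The only nonroutine point I anticipate is invoking the characterization of corank in terms of existence of $m$ linearly independent dual Lipschitz curves solving \eqref{ADJ} (stated right after Definition \ref{CORANK} and ultimately relying on the fact that abnormal covectors $\lambda$ appearing in \eqref{POX9} correspond one-to-one, via the pull-back formula \eqref{popix}, with dual curves $\xi$). Once this identification is in place, the proof reduces to the essentially mechanical substitution $\langle \xi, X_q(\gamma)\rangle = \lambda_q = P_q^{v}(\gamma)$ sketched above, with no further analytic content beyond what is already contained in Theorems \ref{equazionilambda} and \ref{teointduale}.
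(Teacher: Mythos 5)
Your proposal is correct and follows essentially the same route as the paper's proof: translate the adjoint system \eqref{ADJ} into \eqref{8} via Theorem \ref{equazionilambda}, invoke Theorem \ref{teointduale} to write $\lambda_i(t)=P_i^{v}(\gamma(t))$ with $v=\lambda(0)$, identify abnormality with $\lambda_1=\dots=\lambda_r\equiv 0$, i.e.\ $\gamma\subset Z_v$, and use the linearity of $v\mapsto P_i^{v}$ together with evaluation at $t=0$ to match the corank with the number of linearly independent vectors $v_1,\dots,v_m$ (your explicit two-way independence argument is in fact slightly more detailed than the paper's). The only small point to add is that condition i) of Theorem \ref{teo:eqdualvar1} needs $\xi(t)\neq 0$ for \emph{every} $t$, not only $\lambda(0)=v\neq 0$; as the paper notes, this follows from uniqueness of solutions of the linear system \eqref{8} with given initial condition.
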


\begin{proof} 
Recall that the property of having corank $m$
for $\ga$ is equivalent to the existence of $m$ linearly independent solutions
to the system \eqref{8}.

Let  $\ga$ be an abnormal extremal and let 
$\la:[0,1]\to\Rn$ be a Lipschitz curve solving the system \eqref{8} and such
that
$\la\neq0$ pointwise. By Theorem \ref{teointduale}, there is $v\in\Rn$,
$v\neq0$, such that $\la_i = P_i^v(\ga)$ for all $i=1,\ldots,r$. For abnormal
extremals we have $\la_1=\ldots=\la_r=0$ on $[0,1]$, i.e., $\gamma(t)\in Z_v$
for all $t\in[0,1]$.   This shows that B) follows
from A).

On the other hand, if B) holds then the curve $\la$ defined in \eqref{eqlaP}
for any $v=v_1,\ldots,v_m$  satisfies the system \eqref{8} by Theorem
\ref{teointduale}, and moreover $\la_1=\ldots=\la_r=0$. From $v\neq 0$ it
follows that $\la(0) =v \neq 0$, and from the uniqueness of the solution to
\eqref{8} with initial condition it follows that $\la\neq0$ pointwise on
$[0,1]$. If $v_1,\ldots,v_m$ are linearly independent, then the corresponding
curves $\la$ are also linearly independent.
\end{proof}

\begin{remark}  
 Notice that when $v\neq0$ the zero set $Z_v$ is nontrivial, i.e., $Z_v\neq
\Rn$,   because by Proposition \ref{lemma-0} (ii) there is at least one index
$i=1,\ldots,r$ such that $P^v_i\neq0$ is not the
zero polynomial.
\end{remark}

The following corollary   easily follows from Definition \ref{GOG-EX}.

\begin{corol}\label{cor:polin:2} 
Let $G=\Rn$ be a free nilpotent Lie group with stratified algebra $\mathfrak
g = \mathfrak g_1\oplus\mathfrak g_2\oplus \cdots\oplus \mathfrak g_s$ and let
$r_1 = \mathrm{dim}( \mathfrak g_1)$ and $r_2 = \mathrm{dim}(\mathfrak g_2)$.
Let  $\ga:[0,1]\to G$
be a  horizontal curve such that
$\ga(0)=0$. The following statements are
equivalent:

\begin{itemize}
 \item[A)] The curve $\ga$ is a Goh extremal.

 \item[B)] There exists $v\in\Rn$, $v\neq 0$, such that for all
$i=1,\ldots,r_1+r_2$
and for all $t\in[0,1]$ there holds 
\begin{equation}\label{eqast}
    P_i^{v}(\ga(t))=0.
\end{equation}
\end{itemize}
\end{corol}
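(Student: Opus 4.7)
The plan is to translate the defining conditions of a Goh extremal (Definition \ref{GOG-EX}), which involve the 1-form $\xi$ paired against the generators and their brackets, into vanishing conditions on the components $\lambda_1,\ldots,\lambda_n$ of $\xi$ in the Hall frame, and then invoke Theorem \ref{teointduale} to rewrite them in terms of the extremal polynomials $P_i^v$.

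First, I would observe that the relation \eqref{omegalambda} combined with the duality $\theta_i(X_j)=\delta_{ij}$ gives $\lambda_j(t)=\langle\xi(t),X_j(\gamma(t))\rangle$ for $j=1,\ldots,n$. Since $\theta_1,\ldots,\theta_n$ form a basis of $1$-forms, $\xi$ and $\lambda$ are related by an invertible (pointwise) linear transformation, so the Pontryagin nonvanishing $\xi\neq 0$ is equivalent to $\lambda\neq 0$; moreover, because $\lambda$ solves the linear ODE system \eqref{8}, uniqueness makes this equivalent to $v:=\lambda(0)\neq 0$.

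Next, I would exploit the Hall-basis structure. By construction of the Hall algorithm in Section \ref{HALLE}, the first $r_1$ elements $X_1,\ldots,X_{r_1}$ form a basis of $\mathfrak{g}_1$, and the next $r_2$ elements $X_{r_1+1},\ldots,X_{r_1+r_2}$ are exactly the degree-two Hall elements, i.e.\ commutators $[X_i,X_j]$ with $i>j$ and $i,j\in\{1,\ldots,r_1\}$; they form a basis of $\mathfrak{g}_2$. Consequently, the family $\{[X_i,X_j]: i,j=1,\ldots,r_1\}$ spans $\mathfrak{g}_2$ and each of its elements is (up to sign) one of the $X_k$ with $r_1<k\leq r_1+r_2$. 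Therefore the Goh conditions
\[
\langle\xi,X_i(\gamma)\rangle=0\ (i=1,\ldots,r_1),\qquad \langle\xi,[X_i,X_j](\gamma)\rangle=0\ (i,j=1,\ldots,r_1)
\]
are equivalent to $\lambda_i(t)=0$ on $[0,1]$ for every $i=1,\ldots,r_1+r_2$.

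Finally, I would apply Theorem \ref{teointduale}: $\lambda$ is a Lipschitz solution of \eqref{8} along $\gamma$ (with $\gamma(0)=0$) if and only if $\lambda_i(t)=P_i^v(\gamma(t))$ for some $v\in\R^n$, and then necessarily $v=\lambda(0)$. Combining this with the previous paragraph, the existence of a dual curve $\xi\neq 0$ realizing the Goh conditions is equivalent to the existence of $v\neq 0$ for which $P_i^v(\gamma(t))=0$ for all $t\in[0,1]$ and all $i=1,\ldots,r_1+r_2$, which is exactly statement B). The only real content is the identification of the $\mathfrak{g}_2$-layer with the Hall basis elements $X_{r_1+1},\ldots,X_{r_1+r_2}$; everything else is bookkeeping on top of Theorem \ref{teointduale}.
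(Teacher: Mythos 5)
Your proposal is correct and follows exactly the route the paper intends: the paper gives no detailed argument (it states the corollary ``easily follows from Definition \ref{GOG-EX}''), and the intended reasoning is precisely your bookkeeping --- identify the degree-two Hall elements with a basis of $\mathfrak g_2$, translate the Goh conditions into $\lambda_i=0$ for $i=1,\dots,r_1+r_2$ via \eqref{omegalambda} and Theorem \ref{equazionilambda}, and apply Theorem \ref{teointduale} together with the nonvanishing argument $\xi\neq0 \iff \lambda(0)=v\neq0$. Nothing further is needed.
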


\begin{remark} \label{Dola} 
Given $v\in \R^n$ with $v\neq 0$, we call the set  
\[
   G_v =  \{x\in  \Rn: P_i^v(x) =0 \text{ for all }i=1,\ldots,r_1+r_2\}
\]
a {\em Goh variety} of the free nilpotent group $G$. By Proposition
\ref{lemma-0} (iii) we have $v_i=0$ for all $i=1,\dots,r_1+r_2$. Therefore, by
Proposition \ref{lemma-0} (ii) and Remark \ref{lemma-01}, there are at least
one index $i\in\{1,\ldots,r_1\}$ and one index $j\in\{r_1+1,\ldots,r_1+r_2\}$
such that
$P^v_i\neq 0$ and $P^v_j\neq0$.

The polynomials $P_i^v$ with $i=r_1+1,\ldots,r_1+ r_2$ completely describe
the horizontal sections of $G_v$. Namely, let $\gamma:[0,1]\to\Rn$ be a
\emph{horizontal} curve such that $\gamma(0) = 0$ and 
\[
 \gamma(t) \in  \{x\in  \Rn: P_i^v(x) =0 \text{ for all
}i=r_1+1,\ldots,r_1+r_2\}
\]
for all $t\in[0,1]$, where $v\in\Rn$ is a vector such that $v_i=0$, for all
$i=1,\ldots, r_1$. Then, by Proposition \ref{propderivpolinomi} for any index
$i=1,\ldots,r_1$ and for almost every  $t\in[0,1]$ we have
\[
 \frac{d}{dt } P_i^ v (\gamma(t)) = \sum_{k=1}^{r_1} \dot\gamma_k(t) X_k
P_i^v(\gamma(t)) =  \sum_{k=1}^{r_1} \dot\gamma_k(t) \sum_{\ell=1}^n
c_{ki}^\ell P_\ell^v(\gamma(t)) = 0,
\]
because the only indices $\ell$ involved in the previous sum are those with
degree 2. 
We deduce that $P_i^v(\gamma(t)) = P_i^v(\gamma(0)) = P_i^v(0) = v_i=0$.

\end{remark}

\section{Extremal curves in stratified groups}\label{sec:notfree}

In this section, we   give a partial 
classification of extremal curves in general stratified Lie groups. The
description is
not optimal because it involves a ``lifting'' procedure.   A more complete
result,
e.g., a purely algebraic characterization of abnormal
extremals, seems to require an explicit extension of Grayson-Grossman results
to nonfree nilpotent groups.

Let $Y_1,\ldots,Y_r$, $r\geq 2$, be smooth vector fields in $\Rn$ and let
$\pi:\Rn\to\R^m$, with $m\leq n$, be a smooth mapping.
Let us consider the following system of vector fields in $\R^m$:
\begin{equation}\label{xilox}
  X_1 =\pi_*(Y_1),\ldots, X_r= \pi_*(Y_r),                            
\end{equation}
where $\pi_*$ is the differential of $\pi$. Here and hereafter we assume that
$X_1,\dots, X_r$ are linearly independent at each point.

Let $\ga:[0,1]\to \R^m$ be a horizontal curve for $X_1,\ldots,X_r$ 
with $\ga(0)=0\in\R^m$. Then there are controls
$h_1,\ldots,h_r\in
L^2([0,1])$ such that
\begin{equation}\label{controllix}
\dot \ga(t) = \sum_{j=1}^r h_j(t) X_j(\ga(t)),\quad \textrm{for
a.e.~$t\in[0,1]$}.
\end{equation}
We call the curve $\kappa:[0,1]\to \Rn$ 
such that $\kappa(0)=0$ and 
$\dot \kappa (t) = \sum_{j=1}^r h_j(t) Y_j(\kappa(t))$
for a.e.~$t\in[0,1]$
the \emph{lift of $\ga$} to $\Rn$.

In the sequel, we denote by $T^*\Rn$ and $T^*\R^m$ the cotangent spaces to $\Rn$
and $\R^m$, respectively, and we denote by   $\pi^*:T^*
\R^m\to T^* \Rn$  the pull-back mapping induced by $\pi$. For the sake of
completeness, we provide the proof of the following easy fact.

\begin{prop} \label{progetto}
Let $\ga:[0,1]\to \R^ m $ be a horizontal curve for
$X_1,\ldots,X_r$ with $\gamma(0)=0$ and let $\kappa:[0,1]\to\R^m$ be the lift of
$\ga$ to $\Rn$. If $\xi:[0,1]\to T^*\R^m$ 
is a dual curve for $\gamma$ then $\pi^*\xi:[0,1]\to T^*\R^n$ is
a dual curve for $\kappa$.
\end{prop}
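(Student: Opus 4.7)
The plan is to verify the three defining conditions i), ii), iii) of Theorem \ref{teo:eqdualvar1} for the pair $(\xi_0,\pi^*\xi)$ along the lifted curve $\kappa$, keeping the same $\xi_0\in\{0,1\}$ that makes $\xi$ a dual curve of $\gamma$. As a preliminary I would record that $\gamma=\pi\circ\kappa$: since $X_j=\pi_*Y_j$, the curve $\pi\circ\kappa$ satisfies $\frac{d}{dt}\pi(\kappa)=\sum_j h_j X_j(\pi(\kappa))$ with initial value $0$, and uniqueness forces it to agree with $\gamma$. The same argument applied to arbitrary initial data gives the key intertwining $\pi\circ\Psi_t=\Phi_t\circ\pi$ between the controlled flows $\Psi_t$ in $\R^n$ and $\Phi_t$ in $\R^m$ generated by $\sum_j h_j Y_j$ and $\sum_j h_j X_j$ respectively.

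Condition ii) is then immediate from the definition of the pullback: for every $j=1,\dots,r$ and a.e.\ $t\in[0,1]$,
\[
\xi_0 h_j(t)+\langle \pi^*\xi(t),Y_j(\kappa(t))\rangle
=\xi_0 h_j(t)+\langle \xi(t),d\pi(\kappa(t))Y_j(\kappa(t))\rangle
=\xi_0 h_j(t)+\langle \xi(t),X_j(\gamma(t))\rangle=0,
\]
the last step being condition ii) for $\xi$ along $\gamma$.

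Condition iii) is the crux of the argument. Rather than manipulating the adjoint system \eqref{ADJ} in coordinates, I would exploit the flow characterization \eqref{popix}, namely $\xi(t)=(\Phi_t^{-1})^*\xi(0)$. From the intertwining above one reads off the identity of maps $\R^n\to\R^m$
\[
\Phi_t^{-1}\circ\pi=\pi\circ\Psi_t^{-1},
\]
and applying the contravariance of pullback yields the operator identity
\[
\pi^*\circ(\Phi_t^{-1})^*=(\Psi_t^{-1})^*\circ\pi^*.
\]
Evaluating both sides on $\xi(0)\in T^*_0\R^m$ and using \eqref{popix} produces
\[
\pi^*\xi(t)=(\Psi_t^{-1})^*\bigl(\pi^*\xi(0)\bigr),
\]
which by the converse direction of \eqref{popix} means exactly that $\pi^*\xi$ solves the adjoint equations \eqref{ADJ} along $\kappa$ with initial datum $(\pi^*\xi)(0)$.

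Finally, condition i) is automatic when $\xi_0=1$, while when $\xi_0=0$ it reduces to the fibrewise injectivity of $\pi^*$, which is guaranteed by $\pi$ being a submersion (as is the case in all applications of interest in this paper, where $\pi$ is the canonical projection from a free nilpotent group onto a quotient): indeed $\xi(t)\neq 0$ for all $t$ then forces $\pi^*\xi(t)\neq 0$. The main obstacle I anticipate is the careful bookkeeping of base points in the pullback identity used for iii); once the flow intertwining $\pi\circ\Psi_t=\Phi_t\circ\pi$ is in hand, the remaining algebra is a formal dualization.
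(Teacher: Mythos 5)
Your proposal is correct and follows essentially the same route as the paper: both rest on the flow intertwining $\pi\circ\Psi_t=\Phi_t\circ\pi$ (proved by ODE uniqueness from $X_j=\pi_*Y_j$) and the formal pullback identity $\pi^*\circ(\Phi_t^{-1})^*=(\Psi_t^{-1})^*\circ\pi^*$ combined with the characterization \eqref{popix} of dual curves. Your explicit checks of conditions i) and ii) merely spell out what the paper leaves implicit, so there is nothing substantively different to compare.
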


\begin{proof} 
 Let  $h_1,\ldots,h_r\in
L^2([0,1])$ be the controls of $\gamma$ as in \eqref{controllix} and
let $\Phi:[0,1]\times \R^m\to \R^m$ be the flow in $\R^m$ associated with these
controls.
Namely, let $\Phi(t,x)=\gamma_x(t)$ where $\gamma_x:[0,1]\to \R^m$ is the
solution to the problem
\[
 \dot\gamma_x =\sum_{j=1}^r h_j X_j(\gamma_x)\quad \text{a.e.~and
$\gamma_x(0)=x\in \R^m$}.
\] 
We also let $\Phi_t(x) = \Phi(t,x)$. 
Finally, let us denote by $\Psi:[0,1]\times \Rn\to \Rn$ the flow in $\Rn$
associated
with the same controls.

By the characterization \eqref{popix} of dual curves, we have 
\begin{equation}
\label{popix2}
 \xi (t) = (\Phi_t^{-1})^*\xi(0),
\end{equation}
for some $\xi(0)\in T_0^*\R^m$, $\xi(0)\neq0$. 
Above, $(\Phi_t^{-1})^*\xi(0)$
is the 
 pull-back via the diffeomorphism $\Phi_t^{-1}:\R^m\to \R^m$.

Let $\eta =\pi^*\xi$ be the pull-back to $\R^n$ of $\xi$ by  
 $\pi$. We claim that for any $t\in[0,1]$ we have
\begin{equation} \label{CLAM}
 \eta(t) = (\Psi_t^{-1})^*\eta(0),
\end{equation}
and thus $\eta:[0,1]\to T^*\R^n$ is a dual curve for   the
lift $\kappa$  to $\R^n$ of $\ga$.

To prove this claim, we preliminarily show that $\pi$ commutes
with the flows $\Phi$ and $\Psi$. Namely, for any $y\in \R^n$ and $t\in[0,1]$ we
have:
\begin{equation}\label{PUK}
   \pi( \Psi_t (y)) = \Phi_t( \pi(y)).
\end{equation}
This identity holds when $t=0$ because $\pi( \Psi_0 (y)) = \pi(y) =
\Phi_0(\pi(y))$.
Let us compute the derivatives in $t$ of the left and of the right  hand
sides in \eqref{PUK}.   We have:
\[\begin{split}
\frac{d}{dt}\pi( \Psi_t (y)) 
& 
=\pi_* \Big( \frac{d}{dt}  \Psi_t (y)\Big)
\ =\ 
\pi_* \Big( \sum_{j=1}^r h_j(t) Y_j( \Psi_t (y)) \Big) 
\\
& 
= \sum_{j=1}^r  h_j(t)\pi_*(Y_j( \Psi_t (y))) 
\ =\
 \sum_{j=1}^r  h_j(t)X_j( \pi(\Psi_t (y))).
\end{split}
\]
In the last line, we used \eqref{xilox}. On the other hand, we have
\[
\frac{d}{dt} \Phi_t (\pi(y)) 
= \sum_{j=1}^r  h_j(t)X_j( \Phi_t (\pi(y) )).
\]
It follows that the curves $t\mapsto   \pi( \Psi_t (y)) $ and $t\mapsto 
\Phi_t( \pi(y))$ solve the same differential equation with the same initial
condition. Now, identity \eqref{PUK} follows from the uniqueness of the
solution. The same argument  proves that
\begin{equation}\label{PUK1}
   \pi\circ  \Psi_t ^{-1} = \Phi_t^{-1} \circ  \pi,\quad t\in[0,1].
\end{equation}

Now we prove our main claim \eqref{CLAM}. By \eqref{popix}, by the composition
rule for the pull-back, and by \eqref{PUK1}, we have
\[
\begin{split}
\eta(t) &= \pi^*(\xi(t) ) = \pi^* ((\Phi_t^{-1})^*\xi(0)) 
\\
&=  (\Phi_t^{-1}
\circ \pi ) ^* \xi(0)=  (\pi\circ \Psi_t^{-1}
) ^* \xi(0)
\\&
=  ( \Psi_t^{-1})^* \pi^* (\xi(0))=   ( \Psi_t^{-1}) ^* \eta(0).
\end{split}
\] 
This ends the proof.
\end{proof}

\begin{remark} \label{kraft}
 The statement of Proposition \ref{progetto} can be improved in the following
sense: 
if $\gamma$ is an abnormal (respectively, Goh) extremal with dual curve $\xi$,
then
$\kappa$ is an abnormal (resp., Goh) extremal with dual curve $\pi^*\xi$.
\end{remark}

\begin{remark} \label{kraft1}
 Let $g$ be a quadratic form on the distribution $\mathcal D$ on $\R^m$ spanned
by $X_1,\ldots,X_r$, and let $h = \pi^* g$ be the pull-back of $g$ to the
distribution
$\mathcal E$ on $\R^n$ spanned by $Y_1,\ldots,Y_r$. Then, $\pi$ preserves the
length of horizontal curves. Hence, if $\gamma$ is a length minimizing curve in
$(\R^m,\mathcal D,g)$, then the lift
$\kappa$ of $\gamma$ to $\Rn$ is a length minimizing curve in $(\R^n,\mathcal
E, h)$.
\end{remark}

Now we pass to stratified Lie groups.
Let $G$ be a stratified $m$-dimensional Lie group of step $s\geq 2$ and rank
$r\geq 2$. Its Lie algebra $\mathfrak g$ admits a stratification 
\[
 \mathfrak g = \mathfrak g_{1} \oplus \ldots \oplus \mathfrak g_{s},
\]
where $\mathfrak g_{i+1} = [\mathfrak g_1,\mathfrak g_i]$ for   $i=1,\ldots,
s-1$ and
$\mathfrak g_i=\{0\}$ for $i>s$. The first layer $\mathfrak g_1$ has dimension
$r$. 

Let $F$ be the free nilpotent Lie group of step $s\geq 2$ and rank
$r\geq 2$. Let $\mathfrak f$ be its Lie algebra and let $n=\dim(\mathfrak f)$.
Let $Y_1,\ldots,Y_r$ be generators
of $\mathfrak f$.
There is a surjective Lie group homomorphism $\pi:F\to G$, with   differential
mapping $\pi_*:\mathfrak f\to\mathfrak g$, such that $\pi_\ast Y_i\in\mathfrak
g_1$, $i=1,\dots,r$. 
We complete $Y_1,\ldots,Y_r$ to a Hall basis
$Y_1,\ldots,Y_n$ of $\mathfrak f$.
By exponential coordinates of the second type, we can identify $F$ with $\Rn$
and assume that $Y_1,\ldots,Y_n$ is a Hall-Grayson-Grossman basis of left
invariant vector fields on $\Rn = F$.

Let $S\subset \{1,2,\ldots,n\}$ be a set such that the
vector fields $\pi_* Y_s$ with $s\in S$ form a basis for $\mathfrak g$. We have 
$m
= \# S$ and $S = \{s_1<s_2<\ldots<s_m\}$. We relabel the basis in the following
way
\[
  X_ i = \pi_*Y_{s_i},\quad i=1,\ldots,m.
\]
As $\mathfrak g$ has rank $r$, then the vector fields $X_i = \pi_*
Y_i\in\mathfrak g_1$,
 $i=1,\ldots,r$, are generators of $\mathfrak g$. 
There are constants $\zeta_{ij}\in\R$ such that
\begin{equation}\label{ZETA}
  \pi_* Y_i = \sum_{j=1}^m \zeta_{ij} X_j.
\end{equation}

In the following theorem, $G=\R^m$ is a stratified Lie group with rank $r$ and
step $s$, and $F=\Rn$ is the free nilpotent Lie group with rank $r$ and step
$s$.
The lift procedure from $G$ to $F$ is defined with respect to the systems of
vector fields fixed above.

\begin{teo} \label{PIV}
 Let $\ga:[0,1]\to G$ be a horizontal curve with $\gamma(0)=0$ and let
$\kappa:[0,1]\to F$ be the lift of $\gamma$ to $F$. If $\gamma$ is an extremal
curve with dual curve $\la:[0,1]\to\Rm$ then there is $v\in\Rn$ such that
\begin{equation}
 \label{pivo}
   v_i = \sum_{j=1}^m \zeta_{ij} v_{s_j},
\end{equation}
and
 the coordinates  of $\la$
satisfy, for all $i=1,\ldots,m$,
\[
           \la_i(t) = P^v_{s_i}(\kappa(t)), \quad t\in[0,1].
          \] 
\end{teo}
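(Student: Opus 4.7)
The plan is to reduce everything to the free group $F$, where Theorem \ref{teointduale} is available. First, let $\xi$ be a dual curve for $\gamma$. By Proposition \ref{progetto}, $\eta := \pi^*\xi$ is a dual curve for the lift $\kappa$, and $\kappa(0)=0$ by construction. Since the Hall-Grayson-Grossman basis $Y_1,\ldots,Y_n$ satisfies \eqref{coordorizzX_j} on $F$, Theorem \ref{equazionilambda} applies and the coordinates $\mu_1,\ldots,\mu_n$ of $\eta$ in the coframe $\theta_1,\ldots,\theta_n$ dual to $Y_1,\ldots,Y_n$ solve the system \eqref{8}. Theorem \ref{teointduale} then yields a vector $v\in\Rn$, with $v=\mu(0)$, such that
\[
\mu_k(t) = P_k^v(\kappa(t)) \qquad \text{for every } k=1,\ldots,n \text{ and } t\in[0,1].
\]

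The next step is to match the two coordinate systems. Let $\theta'_1,\ldots,\theta'_m$ be the coframe on $G$ dual to $X_1,\ldots,X_m$, so that $\xi = \sum_{i=1}^m \lambda_i\,\theta'_i(\gamma)$. Because $\pi$ is a Lie group homomorphism, $\pi_*Y_k$ is a left-invariant vector field on $G$, and \eqref{ZETA} gives
\[
(\pi^*\theta'_i)(Y_k) = \theta'_i(\pi_*Y_k) = \zeta_{ki}, \qquad k=1,\ldots,n,\ i=1,\ldots,m.
\]
Left-invariance of the coframe $\{\theta_k\}$ then forces $\pi^*\theta'_i = \sum_{k=1}^n \zeta_{ki}\theta_k$, and consequently
\[
\mu_k(t) = \sum_{i=1}^m \zeta_{ki}\,\lambda_i(t), \qquad k=1,\ldots,n.
\]
Specializing to $k=s_\ell$, the equality $\pi_*Y_{s_\ell}=X_\ell$ forces $\zeta_{s_\ell,i}=\delta_{\ell i}$, and so $\mu_{s_\ell}=\lambda_\ell$ for every $\ell=1,\ldots,m$.

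Combining the two displayed formulas yields $\lambda_i(t) = \mu_{s_i}(t) = P^v_{s_i}(\kappa(t))$, which is the required identity. Evaluating the relation $\mu_k=\sum_j \zeta_{kj}\lambda_j$ at $t=0$, and using $v_k=\mu_k(0)$ together with $v_{s_j}=\mu_{s_j}(0)=\lambda_j(0)$, gives exactly the compatibility condition \eqref{pivo}. The only step that requires care is the pullback identity $\pi^*\theta'_i=\sum_k \zeta_{ki}\theta_k$, whose justification rests on the left-invariance of both coframes and on $\pi$ being a homomorphism (so that $\pi_*Y_k$ is genuinely left-invariant on $G$ and the scalar $\zeta_{ki}$ is constant along $\kappa$); once this is in place, the entire conclusion is a direct corollary of Theorem \ref{teointduale} applied to the lifted curve $\kappa$ in $F$.
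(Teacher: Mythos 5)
Your argument is correct and is essentially the paper's own proof: lift $\gamma$ to $\kappa$, use Proposition \ref{progetto} to get the dual curve $\pi^*\xi$, apply Theorem \ref{teointduale} (via Theorem \ref{equazionilambda}) in the free group, and translate between the two coordinate systems using \eqref{ZETA}. Your coframe-pullback identity $\pi^*\theta'_i=\sum_k\zeta_{ki}\theta_k$ and the observation $\zeta_{s_\ell i}=\delta_{\ell i}$ are just a repackaging of the paper's pointwise computations $\mu_i=\lambda(\pi_*Y_i)=\sum_j\zeta_{ij}\mu_{s_j}$ and $\lambda_i=\mu_{s_i}$, so there is no substantive difference.
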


\begin{proof}
 If $\gamma$ is an extremal curve in $G$ with dual curve $\la$, then $\kappa$
is an extremal curve in $F$ with dual curve $\mu = \pi^*\la$, by Proposition
\ref{progetto}.
By Theorem \ref{teointduale}, there is $v\in\Rn$ such that the coordinates of
$\mu $ in the dual basis of $Y_1,\ldots, Y_n$ are
\[
  \mu_i(t) = P^v_i(\kappa(t)),\quad t\in[0,1],
\]
for any $i=1,\ldots,n$, and in fact we have $v = \mu(0) = \pi^*\la(0)$. By
\eqref{ZETA}, we have for $i=1,\ldots,n$
\[
 \mu_i= \mu(Y_i) = \pi^* \la (Y_i) =\la(\pi_* Y_i)=  \sum_{j=1}^m \zeta_{ij}
\la (X_j)=  \sum_{j=1}^m \zeta_{ij}
\mu_{s_j}.
\]
At $t=0$, this identity implies the relation \eqref{pivo} for $v$.
On the other hand, for any $i=1,\ldots,m$ we have
\[
 \la_ i = \la(X_i) = \la(\pi_* Y_{s_i}) =\mu (Y_{s_i}) =\mu_{s_i}=
P_{s_i}^v(\kappa).
\] 
\end{proof}

\section{Examples and applications}
\label{EXA}

In this section, we discuss some examples on how   theorems and formulae of
Sections \ref{agio} and \ref{sec:notfree} can be applied.

\subsection{Regularity of geodesics in stratified groups of step 3}
 
We give a short and alternative proof of a result proved in \cite{cinesi}.
 
\begin{teo}
Let $G$ be a stratified Lie group of step $3$ with a smooth left invariant
quadratic form $g$  on the horizontal distribution $\mathcal D$. Any length
minimizing curve in $(G,\mathcal D,g)$ is of class $C^\infty$. 
\end{teo}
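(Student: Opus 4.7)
The strategy is to reduce to a strictly abnormal length minimizer, lift to the free step-$3$ group, and exploit the severe degree bound on the extremal polynomials $P_i^v$ to force smoothness of the controls.

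Normal extremals are $C^\infty$ by the Hamiltonian formulation discussed after \eqref{HJ}, so it suffices to treat a strictly abnormal length minimizer $\gamma:[0,1]\to G$. Using Remark~\ref{kraft1}, lift $\gamma$ to a length-minimizing curve $\kappa:[0,1]\to F$ in the free nilpotent group $F$ of step $3$ and rank $r$; by Remark~\ref{kraft}, $\kappa$ is also strictly abnormal. Theorem~\ref{teo:Goh} combined with Corollary~\ref{cor:polin:2} then produces $v\in\R^n\setminus\{0\}$ satisfying $P_i^v(\kappa(t))=0$ for every $i\in\{1,\dots,r_1+r_2\}$ and $t\in[0,1]$. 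Evaluating at $t=0$ and invoking Proposition~\ref{lemma-0}(iii) together with Remark~\ref{lemma-01} forces $v_i=0$ for $i=1,\dots,r_1+r_2$, so $v$ is supported on the third layer of $\mathfrak{f}$.

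The step-$3$ bound $\deg P_i^v\leq 3-d(i)$, combined with the support of $v$, has a very rigid consequence: $P_i^v\equiv v_i$ is constant when $i$ is in layer $3$; when $i$ is in layer $2$, $P_i^v$ is a linear form involving only the first-layer coordinates of $\kappa$ (because $[X_i,X_j]$ lands in the third layer only if $j$ is in the first layer); and when $i$ is in layer $1$, $P_i^v$ is a polynomial of degree at most $2$. The Goh condition on layer-$2$ indices therefore becomes a constraint of the form $A(v)\,\kappa^{(1)}(t)=0$ on $[0,1]$, where $\kappa^{(1)}=(\kappa_1,\dots,\kappa_r)$ and $A(v)$ is a constant matrix. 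Differentiating in $t$ yields $A(v)\,h(t)=0$ for a.e.~$t$, so the controls $h=(h_1,\dots,h_r)$ of $\kappa$ take values in a fixed linear subspace $K\subseteq\R^r$ that depends only on $v$.

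To conclude, I would observe that $\kappa$ remains length minimizing in the reduced sub-Riemannian problem whose distribution is generated by a basis of $K$, and then apply Pontryagin's principle to this reduced problem together with the remaining at-most-quadratic relations $P_i^v(\kappa)=0$ for $i$ in layer $1$. The layer-$2$ coordinates obey the explicit horizontal equation $\dot\kappa_j=-h_{j_0}\kappa_{j_1}$, so substituting them into the layer-$1$ conditions and differentiating yields a Hamiltonian-type system with smooth coefficients for $h$, which forces $h\in C^\infty$; pushing down via $\pi$ gives $\gamma\in C^\infty$. The main obstacle is precisely this last step: one must verify that the reduced problem is either normal, or that the layer-$1$ quadratic constraints, once coupled to the linear subspace constraint on $h$, pin $h$ down as the solution of a smooth ODE. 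The step-$3$ hypothesis is essential here, because it keeps every $P_i^v$ at total degree $\leq 2$ and truncates the Jacobi cascade of Lemma~\ref{Roberto} at a single level, so no further non-linearity can be generated by iterating differentiation.
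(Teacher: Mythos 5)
Your reduction is on the paper's track up to the point where the Goh covector $v$ is shown to be supported on the third layer and the layer-two polynomials $P_i^v$ become constant-coefficient linear forms in the horizontal coordinates, so that $A(v)\,\kappa^{(1)}\equiv 0$ and hence $A(v)h=0$ a.e.; this is exactly the paper's equation \eqref{DERO2}. The gap is in your concluding step, and it is not a technical verification you postponed but an obstruction: the layer-one relations $P_i^v(\kappa)=0$, $i=1,\dots,r_1$, carry no information beyond what you already have. By Remark \ref{Dola} (which rests on Proposition \ref{propderivpolinomi}), once $v_1=\dots=v_{r_1}=0$ and the layer-two linear constraints hold along a horizontal curve issuing from the origin, the layer-one polynomials vanish automatically along that curve. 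Consequently the full set of Goh constraints for this covector is satisfied by \emph{every} horizontal curve from $0$ whose controls take values in $K=\ker A(v)$, including curves of merely Lipschitz regularity (compare the explicitly nonsmooth Goh extremals of Section \ref{Example2}). So no amount of substituting the layer-two dynamics into the layer-one quadrics and differentiating can produce a smooth ODE that pins down $h$: extremality plus the Goh condition alone do not imply smoothness, and your proposed ``Hamiltonian-type system with smooth coefficients'' does not exist in general.

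The missing idea is to re-invoke \emph{minimality}, not extremality, in the reduced problem, and to iterate. The linear constraint shows that $\kappa$ is horizontal for the proper Carnot subgroup $F'\subset F$ whose first layer is a hyperplane (or the subspace $K$) inside $\mathfrak f_1$; since this subalgebra is generated by a subspace of the first layer of a free algebra, $F'$ is again free of step $3$ and strictly smaller rank, and $\kappa$ is still length minimizing there for the restricted metric. If $\kappa$ is not smooth it cannot be normal in $F'$, hence it is strictly abnormal and, by Theorem \ref{teo:Goh} and Corollary \ref{cor:polin:2} applied \emph{inside $F'$}, it admits a new Goh covector producing a new nonzero linear constraint on the (fewer) horizontal coordinates. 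Repeating this finitely many times one reaches rank $2$, where a nonzero linear relation among the two horizontal coordinates forces the minimizer to run along a straight segment, hence to be smooth, contradicting the standing nonsmoothness assumption. This recursion on the rank is the engine of the paper's proof; without it (or some substitute use of minimality in the reduced group) your argument cannot close. A minor additional point: strict abnormality of the lift does not follow from Remark \ref{kraft} alone; it follows because the lift is a nonsmooth minimizer in $F$ (smoothness of $\kappa$ would give smoothness of $\gamma=\pi(\kappa)$), hence cannot be a normal extremal.
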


\begin{proof} By Proposition
\ref{progetto} and Remark \ref{kraft1}, we can assume that $G$ is free.
Let $n$ and $r$ be the dimension and the rank of $G$,
respectively. By contradiction, assume there is a length minimizing
curve $\gamma:[0,1]\to G$ that is not of class $C^\infty$.  
Then $\gamma$ is a strictly abnormal extremal (normal extremals are of class
$C^\infty$) and thus a Goh extremal.

We can assume that $\gamma(0)=0$.
By Theorem \ref{cor:polin}, there are an index $i=1,\ldots,n$ with $d(i)=2$ and
$v\in\Rn$, $v\neq 0$, such that $P_i^v(\gamma)=0$. Notice that the polynomial
\[
    P_i^v(x) = -\sum_{\substack{j:d(j)=3\\\ell:d(\ell)=1}} c_{i\ell}^j
v_j x_\ell   \neq 0
\]
has homogeneous degree $1$. On differentiating in $t$ the identity
$P_i^v(\gamma)=0$, we obtain
\begin{equation}
                                  \label{DERO2}
\sum_{\substack{j:d(j)=3\\\ell:d(\ell)=1}} c_{i\ell}^j
v_j \dot\ga _\ell   =0.
                                 \end{equation}

Let $\mathfrak f$ be the (free) subalgebra of $\mathfrak g =\mathfrak
g_1\oplus \mathfrak g_2 \oplus \mathfrak g_3= \mathrm{Lie}(G)$ generated by
the subspace $\mathfrak f_1\subset\mathfrak g_1$ 
\[
  \mathfrak f_1 = \Big\{ \sum_{\ell:d(\ell)=1} x_\ell X_\ell \in \mathfrak g_1: 
\sum_{\substack{j:d(j)=3\\\ell:d(\ell)=1}} c_{i\ell}^j
v_j x_\ell  =0\Big\},
\]
where $X_1,\ldots,X_r\in\mathfrak g_1$ are Hall-Grayson-Grossman generators.
As $P_i^v(x)\neq 0$, we have $\mathrm{dim}(\mathfrak f_1) =r-1$.
Let $F\subset G$ be the stratified Lie group with Lie algebra $\mathfrak f$.
By \eqref{DERO2},  $\dot\ga$ is in $\mathfrak f$ and hence the curve $\ga$ is in
$F$. Moreover, 
 $\gamma$ is a length minimizer in $F$ for the restricted quadratic
form, and thus  $\gamma$ is a Goh extremal in $F$ (because it is not smooth).

We can repeat the above reduction argument to conclude that $\gamma$ is a Goh
extremal
in a free nilpotent group of rank $2$. Now the equation 
\[
    P_i^v(\gamma ) = -\sum_{\substack{j:d(j)=3 \\ \ell\in\{1,2\} }}
c_{i\ell}^j
v_j \gamma_\ell   =0,
\]
where $P_i^v$ is a nonzero polynomial, implies that $\gamma$ is a line, and
thus a smooth curve. This is a contradiction.
\end{proof}

\subsection{Regularity of generic length minimizing curves in stratified
Lie groups of rank 2}

We prove that in stratified Lie groups of rank 2 length minimizing curves are
``generically'' smooth. This is a special case of a deeper series of results by
W. Liu and H. J. Sussmann on \emph{regular abnormal extremals}, see \cite{LS}.

Let $G$ be a stratified Lie group with Lie algebra $\mathfrak g = \mathfrak
g_1\oplus\cdots\oplus \mathfrak g_s$ of rank $2$ and step $s\geq 3$.
We have $r_1 = \dim(\mathfrak g_1)=2$, 
$r_2 = \dim(\mathfrak g_2)=1$, and
$r_3 = \dim(\mathfrak g_3)\in\{1,2\}$.
We fix a Hall-Grayson-Grossman basis
$X_1,\ldots,X_n$, and we identify $G$ with $\Rn$. For a dual curve
$\la:[0,1]\to\Rn$, we let  $ \la^{(3)} = \la_{4}$ when $r_3=1$, and
 $ \la^{(3)} =( \la_{4},\la_5)$
when $r_3=2$.

\begin{prop} \label{pappa}
Let $\ga:[0,1]\to G=\Rn$ be a Goh extremal with dual curve $\la:[0,1]\to\Rn$.
Assume that $\la^{(3)}(t)\neq 0$ for all $t\in[0,1]$. Then $\ga$ is an analytic
curve.
\end{prop}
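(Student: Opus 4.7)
The plan is to reduce to the free nilpotent case via Theorem \ref{PIV}. The lift $\kappa:[0,1]\to F$ of $\gamma$ to the free nilpotent Lie group $F$ of rank $2$ and step $s$ is again a Goh extremal by Remark \ref{kraft}, and since the canonical projection $\pi:F\to G$ is an analytic Lie group homomorphism it suffices to prove that $\kappa$ is analytic. Hence I will work with a Hall--Grayson--Grossman basis $Y_1,\ldots,Y_n$ of $F$ and use the explicit formula $\mu_i(t)=P^v_i(\kappa(t))$ from Theorem \ref{teointduale} for the lifted dual curve $\mu$, with $v=\mu(0)\neq 0$ related to $\lambda(0)$ through \eqref{pivo}.

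The key step is to differentiate the degree-$2$ Goh condition $\mu_3=P^v_3(\kappa)\equiv 0$ along $\kappa$. Applying Proposition \ref{propderivpolinomi} together with the Hall identities $[Y_1,Y_3]=-Y_4$ and $[Y_2,Y_3]=-Y_5$ yields
\[
  0=\dot\mu_3=-h_1\mu_4-h_2\mu_5\qquad\text{a.e.~on }[0,1],
\]
so the control pair $(h_1,h_2)$ is pointwise proportional to $(\mu_5,-\mu_4)$. The hypothesis $\lambda^{(3)}\neq 0$ in $G$ translates, via the relation $\lambda_i=P^v_{s_i}(\kappa)$ of Theorem \ref{PIV}, into the statement that at least one of $\mu_4,\mu_5$ is nonvanishing on $[0,1]$; in the generic case $r_3=2$ one obtains the full nonvanishing $(\mu_4,\mu_5)\neq 0$. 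Parametrizing $\kappa$ with constant horizontal speed $c>0$, i.e. $h_1^2+h_2^2=c$, then forces
\[
  (h_1,h_2)=\pm\sqrt{\frac{c}{\mu_4^2+\mu_5^2}}\,(\mu_5,-\mu_4),
\]
which is an analytic function of $(\mu_4,\mu_5)$ on the open set $\{\mu_4^2+\mu_5^2>0\}$. Substituting into the horizontal ODE $\dot\kappa=\sum_{j=1}^2 h_j Y_j(\kappa)$ and into the adjoint system \eqref{8} for $\mu$ produces a closed autonomous analytic ODE system in $(\kappa,\mu)$, and the classical analytic ODE theorem then gives that $\kappa$, and hence $\gamma=\pi\circ\kappa$, is analytic.

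The main obstacle I anticipate is the sign ambiguity in the square root above: a change of sign at some $t_0\in(0,1)$ would correspond to a horizontal corner of $\kappa$, and ruling this out requires either a continuity argument on the dual curve or an analytic-continuation argument applied on each open interval of constant sign. A secondary technical point concerns the nonfree case $r_3=1$, where the hypothesis supplies only the nonvanishing of one of $\mu_4,\mu_5$: there one must argue separately that either $\mu_5\equiv 0$ on $[0,1]$ (so $h_1\equiv 0$ and $\kappa$ is a trivially analytic one-parameter subgroup) or the preceding analytic-ODE argument applies on the open set where $\mu_5\neq 0$ and extends by analytic continuation to the closed interval.
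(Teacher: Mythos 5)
Your reduction to the free group and your key computation are exactly the paper's: the identity $\dot\mu_3=-h_1\mu_4-h_2\mu_5$ obtained by differentiating $P_3^v(\kappa)\equiv 0$ is, via Proposition \ref{propderivpolinomi}, the same as the paper's relations $X_1P_3^v=-P_4^v$, $X_2P_3^v=-P_5^v$, and the hypothesis $\la^{(3)}\neq 0$ enters in the same way through Theorem \ref{teointduale}. Where you diverge is the final step. The paper never solves for the controls: it notes that $\ga$ lies in the variety $\Sigma=\{P_3^v=0\}$, that $\nabla P_3^v\neq 0$ along $\ga$ because $(X_1P_3^v(\ga),X_2P_3^v(\ga))=-(\la_4,\la_5)\neq 0$, so that near the curve $\Sigma$ is an analytic hypersurface to which the distribution $\mathcal D=\mathrm{span}\{X_1,X_2\}$ is transversal, and concludes that $\dot\ga$ lies a.e.\ in the analytic line field $T\Sigma\cap\mathcal D$; analyticity is then analyticity of an integral curve of an analytic rank-one distribution. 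You instead invert $h_1\mu_4+h_2\mu_5=0$ together with a speed normalization to write an explicit ODE, and that is where your argument has a genuine hole.

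The sign ambiguity you flag cannot be repaired by either of the devices you suggest, because at the level of the parametrized curve the statement being forced is false: with only the Goh hypothesis the curve can stop and backtrack along the line field. Already in the free rank-$2$ step-$3$ group, $(\mu_4,\mu_5)$ is a nonzero constant, the Goh condition reduces to $h_1\mu_4+h_2\mu_5=0$ a.e., and \emph{any} $L^\infty$ back-and-forth motion along the fixed horizontal line is a Goh extremal with $\la^{(3)}\neq 0$, yet is not analytic as a map. A continuity argument on the dual curve cannot detect the sign, since by Theorem \ref{teointduale} $\mu(t)=\big(P_1^v(\kappa(t)),\dots,P_n^v(\kappa(t))\big)$ depends only on the position $\kappa(t)$ and is the same for forward and backward motion; and analytic continuation across a sign change fails precisely at the point where analyticity would have to break. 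The correct closure is the paper's geometric formulation: what one proves (and what the application to minimizers needs, since constant-speed minimizers do not backtrack) is that $\ga$ is an integral curve of an analytic line field, i.e.\ analytic as an unparametrized curve. Two smaller points: you do not need the adjoint system to close your ODE, since $\mu$ is a function of $\kappa$; and your separate treatment of the case $r_3=1$ is unnecessary, because $\pi_*$ sends $Y_4,Y_5$ onto the one-dimensional $\mathfrak g_3$ with fixed coefficients $(a,b)\neq(0,0)$, so $(\mu_4(t),\mu_5(t))=\la^{(3)}(t)\,(a,b)$ is nonvanishing exactly as in the free case.
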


\begin{proof}
By Proposition \ref{progetto} and Remark \ref{kraft}, we can assume that $G$ is
free; in particular, $\mathrm{dim}(\mathfrak g_3)=2$. By Corollary
\ref{cor:polin:2},
there exists a $v\in\Rn$, $v\neq0$, such that for all $t\in[0,1]$ we have
\begin{equation}\label{LIPO}
 \gamma(t) \in \Sigma = \big\{ x\in\Rn :  P^v_3(x)=0\big\},
\end{equation}
 where $P_3^v(x)\neq0$ is a nonzero polynomial of the form \eqref{gestalt}.

By Proposition \ref{propderivpolinomi}, we have
\[
   X_1 P_3^v(x) = - P_4^v(x)\quad\text{and}\quad
   X_2 P_3^v(x) = - P_5^v(x).
\]
By Theorem \ref{teointduale}, $\la_4 = P_4^v(\ga)$ and $\la_5 = P_5^v(\ga)$,
and the assumption $\la^{(3)}\neq 0$ on $[0,1]$ implies that 
\[
   (X_1P_3^v(\ga))^2+(X_2P_3^v(\ga))^2\neq 0.
\]
In particular, $\nabla P_3^v \neq 0$ and thus $\Sigma$ is an analytic
hypersurface of $\Rn$ in a neighborhood of the support of $\gamma$.
Moreover, the distribution $\mathcal D(x) = \mathrm{span}\{
X_1(x),X_2(x)\big\}$ is transversal to $T_x\Sigma$:
\[
 \mathrm{dim}( T_x\Sigma \cap 
 \mathcal D(x)) = 1, \quad \textrm{for $x\in \gamma([0,1])$.}
\]
Inasmuch as $\gamma$ is horizontal and by \eqref{LIPO}, this implies that $\ga$
is
analytic.
\end{proof}

\begin{remark}
By Theorem 5 in \cite[p.~59]{LS}, Goh extremals as in Proposition
\ref{pappa}
are locally length minimizing.
 
\end{remark}

\subsection{A strictly abnormal curve}
We review  Gol\'e-Karidi's example \cite{GK} of a strictly
abnormal curve in a stratified group.

Let $G$ be a (free) nilpotent Lie group of rank $2$ and step $s\geq 4$, as in
the previous example. We fix a Hall basis $X_1,\ldots,X_n$ and we highlight the
first commutators
\begin{equation}\label{COMMU}
 X_3 = [X_2,X_1],\quad
 X_4 = [X_3,X_1],\quad
 X_5 = [X_3,X_2],\quad
 X_6 = 
 [[X_3,X_1],X_1].
\end{equation}

We look for a Goh extremal $\gamma:[0,1]\to G=\Rn$. To this aim, let us
consider the polynomial
\[
  P_3^v (x) = \sum_{\alpha\in\I} \phi_{3\alpha}(v) x^\alpha = \sum_{\alpha\in\I}
\frac{(-1)^{|\alpha| }}{\alpha!} \sum_{j=1}^ n c_{3\alpha}^j v_j x^\alpha.
\]
We look for a $v\in\Rn$, $v\neq0$, and for a curve $\ga$ such that
$P_3^v(\gamma)=0$. The identities $P_1^v(\ga)=P_2^v(\gamma)=0$ will then follow
from Proposition \ref{propderivpolinomi}, by the argument of Remark \ref{Dola},
provided that we can choose $v_1=v_2=0$.

The coefficient  of $v_5$ in the polynomial $P_3^v(x)$ is
\[
  \sum_{\alpha\in\I} \frac{(-1)^{|\alpha|} }{\alpha!} c_{3\alpha}^5 
x^\alpha = -x_2,
\]
because the unique multi-index $\alpha\in\I$ such that $c_{3\alpha}^5\neq0$ is
$\alpha = (0,1,0,\ldots,0)$ and for this $\alpha$ we have $c_{3\alpha}^5= 1$,
see \eqref{COMMU}.

The coefficient  of $v_6$ in the polynomial $P_3^v(x)$ is
\[
  \sum_{\alpha\in\I} \frac{(-1)^{|\alpha|} }{\alpha!} c_{3\alpha}^6 
x^\alpha = \frac 12 x_1^2,
\]
because the unique multi-index $\alpha\in\I$ such that $c_{3\alpha}^6\neq0$ is
$\alpha = (2,0,\ldots,0)$ and for this $\alpha$ we have $c_{3\alpha}^6= 1$,
see again \eqref{COMMU}.

With $v_5=v_6=1$ and with $v_j=0$ otherwise, the polynomial $P_3^v(x)$ is
\[
 P_3^v(x) = \frac{1}{2} x_1^2 - x_2.
\]
Now we clearly see that the horizontal curve $\gamma:\R \to\Rn$ such that
$\gamma_1(t) = t$ and $\gamma_2(t) = \frac 12 t^2$ is a Goh extremal, by
Corollary  \ref{cor:polin:2} and Proposition \ref{propderivpolinomi}.

We prove that $\ga$ is a strictly abnormal curve. If $\ga$ were a normal
extremal with dual curve $\la$, then by \eqref{acca} and \eqref{omegalambda} we
would have
\[
 \dot\gamma_ j= h_ j =-\langle \xi,X_j\rangle = \lambda_j,\quad j=1,2. 
\]
This yields $\lambda_1 = 1$ and $\lambda_2 = t$.
Moreover, the system of equations \eqref{8} is
\[
   \dot\la_ i =-\sum_{k=1}^n \big\{ c_{i1}^ k + c_{i2}^k t\big\}
\lambda_k,\quad i=1,\ldots,n.
\]
When $i=1$ we obtain $ t\la_3=0$, whereas for  $i=2$ we 
obtain $\la_3=-1$. This is a contradiction.

\begin{remark}
As proved in \cite{GK}, the curve $\gamma$ above is locally length minimizing.
Indeed, with the previous choice of $v$, we have by Proposition
\ref{propderivpolinomi} that
\[
\lambda_5(t)=P_5^v(\gamma(t))=-(X_2 P_3^v)(\gamma(t))=1\,.
\]
Therefore, $\gamma$ satisfies the assumptions in Proposition \ref{pappa} and is
locally length minimizing by \cite{LS}.
\end{remark}
    
Let us give here the complete formula for the polynomial $P_3^v(x)$ when $G$ is
a free nilpotent Lie group of rank 2 and step $s=6$. This group is
diffeomorphic to $\R^{23}$. The polynomial is
 \[
\begin{split}
P_{3}^v(x)  =\  & v_3 - v_4 x_1 - v_5 x_2
+v_6  \frac{x_1^{2}}{2}+ v_7 x_1 x_2 +v_8  \frac{x_2^{2}}{2}
-v_9  \frac{x_1^{3}}{6} - v_{10}  \frac{x_1^{2}x_2}{2} 
- v_{11}  \frac{x_1x_2^{2}}{2} 
- v_{12}  \frac{x_2^{3}}{6}
 \\
&~~
 + v_{13}(x_4+x_1 x_3) +    v_{14} (x_5+x_2 x_3 )
  \\
&~~
 +v_{15} \frac{x_1^{4}}{24} 
  +v_{16} \frac{x_1^{3}x_2}{6} 
   +v_{17} \frac{x_1^{2}x_2^2}{4} 
    +v_{18} \frac{x_1x_3^{2}}{6} 
    +v_{19} \frac{x_2^{4}}{24} 
     \\
&~~
 +   v_{20}( x_6-\frac{x_1^2x_3}{2} )+   v_{21}(x_7-x_1x_2x_3)
 +  v_{22} ( x_8- \frac{x_2^2x_3}{2} )+   v_{23}(x_2x_4-x_1x_5)
 .
\end{split}
\] 
The homogeneous degree of the polynomial is at most $4$. The variables
$x_9,\ldots, x_{23}$
do not appear.

\subsection{Rank 3 and step 4}\label{Example2}
Let $G$ be the free nilpotent Lie group of rank 3 and step 4. This group is
diffeomorphic to $\R^{32}$. By 
 Corollary \ref{cor:polin:2} and Remark \ref{Dola}, Goh
extremals of $G$ starting from $0$ are
precisely the horizontal curves $\gamma$ contained in the algebraic
set
\[
 \Sigma = \big  \{ x\in\R^{32}: P_4^v(x) = P_5^v(x)=P_6^v(x) =0 \big\}, 
\]
for some $v\in \R^{32}$ such that $v_1=\ldots=v_6=0$.
We list the  the polynomials defining this algebraic set:
\[
\begin{split}
 P_{4}^v(x)  & =   -x_1 v_7-x_2 v_8 
-x_3 v_9 +x_5v_{30}
+x_6v_{31}
\\
&\qquad 
+ \frac{x_1^{2}}{2} v_{15}
+x_1 x_2 v_{16} 
+x_1 x_3 v_{17} 
+ \frac{x_2^{2}}{2} v_{18} +x_2 x_3 v_{19}
+ \frac{x_3^{2}}{2} v_{20}
\\
P_{5}^v(x)  & =  -x_1 v_{10}-x_2 v_{11} 
-x_3 v_{12} 
-x_4v_{30}
+x_6v_{32}
\\
&\qquad
+ \frac{x_1^{2}}{2} v_{21}
+x_1 x_2 v_{22} 
+x_1 x_3 v_{23} 
+ \frac{x_2^{2}}{2} v_{24} +x_2 x_3 v_{25}
+ \frac{x_3^{2}}{2} v_{26}
\\
P_{6}^v(x)  & =  x_1 (v_9
- v_{11})
-x_2 v_{13} 
-x_3 v_{14} 
-x_4 v_{31} -x_5v_{32}
+x_1^{2}(- \frac{1}{2} v_{17}
+\frac{1}{2} v_{22}
+ v_{30})
\\
&\qquad
+x_1 x_2 (-v_{19} 
+  v_{24}
+  v_{31})
+x_1 x_3 (-v_{20} 
+ v_{25})
+ \frac{x_3^{2}}{2} v_{29}
.
\end{split}
\] 
The set $\Sigma$ is an intersection of quadrics.

When $v_7=1$ $v_{18}=2$ and $v_j=0$ otherwise, we have $P^v_4(x) =  
x_2^2-x_1$, $P_5^v(x)=P_6^v(x)=0$.
Let $\phi:[0,1]\to\R$ be \emph{any} Lipschitz function with $\phi(0)=0$. The
horizontal curve
$\gamma:[0,1]\to\R^{32}$ such that $\gamma(0)=0$, $\gamma_1(t) = t^2$,
$\gamma_2(t)=t$ and $\gamma_3(t) = \phi(t)$ is a Goh extremal with purely
Lipschitz regularity.

An interesting question raised by the referee is whether the curve $\gamma$
constructed above is length minimizing (for any choice of the metric in the
horizontal bundle). In its generality, the question is open. We can only
give the following partial answer in the negative, related to the regularity of
$\phi$.

Assume that there exists a point
$t_0\in(0,1)$   such that the following limits exist and are different 
\begin{equation}\label{Lpm}
  \lim_{t\to0^+} \frac{\phi(t_0+t) -\phi(t_0)}{t} \neq                   
                             \lim_{t\to 0^-} \frac{\phi(t_0+t)
-\phi(t_0)}{t}.
\end{equation}
Then we claim that $\gamma$ is not length minimizing (for any
choice of the metric in the
horizontal bundle).

We sketch the argument. We can assume that $\gamma(t_0)=0$. We perform a blow-up
of the curve $\gamma$, as in
\cite[Section 2]{LM}. The  curve obtained by the blow-up of $\gamma$ consists
of two half-lines emanating from $0$ and forming a corner, by \eqref{Lpm}. This
curve will be contained in a Carnot group of rank 2  and step at most 4 (see
Remark 2.5 in \cite{LM}). The length minimality of $\gamma$ would imply the
length
minimality of the limit curve. However, by Example 4.6 in \cite{LM},
length minimizing curves  in Carnot groups of rank 2 and step at most 4 are
$C^\infty$ smooth. Thus \eqref{Lpm} prevents $\gamma$ to be length minimizing.

\end{document}